\theoremstyle{plain}
\newtheorem{theorem}{Theorem}[section]
\newtheorem{proposition}[theorem]{Proposition}
\newtheorem{lemma}[theorem]{Lemma}
\newtheorem{corollary}[theorem]{Corollary}
\newtheorem{remark}[theorem]{Remark}
\newtheorem{conjecture}[theorem]{Conjecture}
\theoremstyle{definition}
\newcommand{\bydef}{\stackrel{\textnormal{\tiny def}}{=}}
\newcommand{\N}{\mathbb{N}}
\newcommand{\Z}{\mathbb{Z}}
\newcommand{\R}{\mathbb{R}}
\newcommand{\C}{\mathbb{C}}
\newcommand{\cX}{\mathcal{X}}
\begin{document}
\title{
From the Lagrange Triangle to the Figure Eight Choreography: Proof of Marchal's Conjecture
}
\author{
Renato Calleja
\thanks
{IIMAS, Universidad Nacional Aut\'{o}noma de M\'{e}xico, Apdo. Postal 20-726, C.P. 01000, M\'{e}xico D.F., M\'{e}xico. {\tt calleja@mym.iimas.unam.mx}.}
\and Carlos Garc\'{i}a-Azpeitia
\thanks
{IIMAS, Universidad Nacional Aut\'{o}noma de M\'{e}xico, Apdo. Postal 20-726, C.P. 01000, M\'{e}xico D.F., M\'{e}xico. {\tt cgazpe@ciencias.unam.mx}.}
\and Olivier H\'{e}not
\thanks
{National Taiwan University, Department of Mathematics, No. 1 Sec. 4 Roosevelt Rd., 10617 Taipei, Taiwan. {\tt olivierhenot@ntu.edu.tw}.}
\and Jean-Philippe Lessard
\thanks
{McGill University, Department of Mathematics and Statistics, 805 Sherbrooke Street West, Montreal, QC, H3A 0B9, Canada. {\tt jp.lessard@mcgill.ca}.}
\and Jason D. Mireles James
\thanks
{Florida Atlantic University, Department of Mathematical Sciences, Science Building, Room 234, 777 Glades Road, Boca Raton, Florida, 33431, USA. {\tt jmirelesjames@fau.edu}.}
}

\date{}

\maketitle


\begin{abstract}
For the three body problem with equal masses, we prove that the most symmetric continuation class of Lagrange's equilateral triangle solution, also referred to as the $P_{12}$ family of Marchal \cite{MR1124619}, contains the remarkable figure eight choreography discovered by Moore in 1993 \cite{DiscoveryEight}, and proven to exist by Chenciner and Montgomery in 2000 \cite{ProofEight}.
This settles a conjecture of Marchal which dates back to the 1999 conference on Celestial Mechanics in Evanston Illinois, celebrating Donald Saari's 60th birthday \cite{Fejoz}.
\end{abstract}

\begin{center}
{\bf \small Key words:}
{\small three body problem, continuation class of the Lagrange triangle,\\ 
figure eight choreography, computer-assisted proof, Marchal's conjecture}
\end{center}



\section{Introduction}

In a paper published in 1993, Moore describes a periodic solution of the gravitational $N$-body problem where three bodies of equal mass follow one another around a closed eight-shaped curve in the plane \cite{DiscoveryEight}.
The orbit was discovered by numerically minimizing the Newtonian potential over the space of closed plane curves via gradient descent.
This \emph{remarkable solution} of the $N$-body problem was discovered independently by Chenciner and Montgomery, whose variational proof of its existence was published in the year 2000 \cite{ProofEight}.
See the right frame of Figure \ref{fig:triangle_eight_1} in the present work, and also \cite{scholarpedia} for graphical illustrations of the three body eight.
These days such an orbit is called a choreography, as the bodies appear to dance around a prescribed curve in space.
The paper \cite{MR1919833} by Chenciner, Gerver, Montgomery and Sim\'{o} provides a lovely introduction to the subject of choreographies in particle systems.

Shortly after the announcement of the eight by Chenciner and Montgomery, Marchal published a paper investigating the discrete symmetries of $3$-body choreographies \cite{P12family}.
In particular, recalling that non-circular periodic solutions of the equal mass three body problem have at most 12 space-time symmetries, he studied the properties of the most symmetric family of \emph{relative periodic orbits} bifurcating from Lagrange's equilateral triangle by continuation with respect to the frequency of a rotating frame.
By a \emph{relative periodic orbit}, we mean a solution which is periodic after changing to an appropriate co-rotating frame.
Marchal referred to this most symmetric continuation class as the $P_{12}$ family, and in the same reference showed that:

\begin{itemize}
\item \textbf{The $\bm{P_{12}}$ family is not empty:} there exists a family of relative periodic orbits bifurcating from the Lagrange triangle and having the maximal allowed 12 symmetries.

\item \textbf{The $\bm{P_{12}}$ family comprises relative choreographies:} in a suitable rotating coordinate system, each periodic orbit 
in the $P_{12}$ family can be viewed as a single closed curve in $\mathbb{R}^3$.
The three bodies follow one another around this curve, spaced out by a time shift of $1/3$ the period.

\item \textbf{Variational properties:} the $P_{12}$ family minimizes the action between appropriate terminal conditions.
\end{itemize} 

Next he observed that the three body figure eight evinces all 12 symmetries of the $P_{12}$ family.
Inspired by this, he proposed that $P_{12}$ actually contains the eight.
We refer to this as \textit{Marchal's conjecture}.

The local structure of the $P_{12}$ family, and hence the plausibility of Marchal's conjecture, is further considered in the papers \cite{MR2425050} and also \cite{MR2134901}, of Chenciner and F\'{e}joz, and of Chenciner, F\'{e}joz and Montgomery respectively.
In particular, the former paper applies a center manifold analysis in a frame rotating with the Lagrange triangle and shows that, near the triangle, the $P_{12}$ family is diffeomorphic to a cylinder, and that the cylinder is transverse to the plane of the Lagrange triangle (in fact it is a vertical Lyapunov family attached Lagrange's relative equilibrium).
So, in appropriate rotating coordinates, $P_{12}$ is described as a smoothly varying one-parameter family of out-of-plane choreographic orbits, which bifurcate from the Lagrangian triangle.

In the latter of the two references just cited, the authors make a local analysis of the eight.
They show -- via variational techniques -- that there are three families of choreographic orbits bifurcating from the figure eight, and that one of these has the symmetries of the $P_{12}$ family.
They also provide numerical evidence which suggests that these are the only families of relative choreographies bifurcating from the figure eight.
Together, these two papers show that there are no local obstructions to Marchal's conjecture.

It remains to demonstrate that the $P_{12}$ family actually bridges the gap between the work of \cite{MR2425050} and \cite{MR2134901}.
The contribution of the present work is to complete this picture, settling Marchal's conjecture in the affirmative.
A precise statement of our main result is given in Theorem \ref{thm:main}.
We summarize it informally as follows.

\begin{theorem}\label{thm:MarchalWasRight}
In the three body problem with equal masses, the figure eight choreography of Moore, Chenciner, and Montgomery is contained in the $P_{12}$ family of Marchal.
\end{theorem}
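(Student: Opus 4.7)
The plan is to carry out a computer-assisted proof by rigorous continuation of a one-parameter family of relative periodic orbits in which the twelve $P_{12}$ symmetries are enforced from the outset. Each orbit is represented in a co-rotating frame by a Fourier series in time, truncated at sufficiently high order, with the symmetries imposed as linear constraints on the Fourier coefficients; this cuts the effective dimension by a factor of twelve and yields a zero-finding problem $F(x,\omega)=0$ in a weighted $\ell^{1}$ Banach space, where $\omega$ is the frequency of the rotating frame (or, near the bifurcation, an amplitude surrogate) used as continuation parameter. Standard Newton-Kantorovich machinery, in the form of the radii polynomial implemented with interval arithmetic, is applied to certify that numerical approximations of $F=0$ lie uniformly close to true smooth zeros along one-dimensional arcs in $(x,\omega)$-space.

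To seed the continuation near the Lagrange triangle, I would use the vertical Lyapunov description of Chenciner and F\'{e}joz \cite{MR2425050}: the naive $\omega$-parameterization is singular at the relative equilibrium, so one reparameterizes by the out-of-plane amplitude $a$, so that $a=0$ is exactly the triangle. A validated power-series parameterization of the cylinder in $a$, obtained by the parameterization method with a posteriori remainder bounds, yields a rigorous initial arc of $P_{12}$ orbits on some interval $[0,a_{0}]$, whose right endpoint lies inside the basin of a standard Newton-Kantorovich proof.

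The core of the argument is then a rigorous pseudo-arclength continuation from that endpoint to a neighborhood of the figure eight. First I would compute numerically, without rigor, a polygonal arc $\{(x_{j},\omega_{j})\}_{j=0}^{N}$ of approximate zeros of $F$ connecting the certified seed near Lagrange to a high-accuracy approximation of the eight. For each segment $[j,j+1]$ I would set up a uniform radii polynomial inequality on the straight line between the two numerical nodes and verify it with interval arithmetic, thereby certifying a smooth arc of true zeros along that segment. Concatenating all segments produces a continuous branch of relative periodic orbits lying entirely in the $P_{12}$ family and joining the Lagrange cylinder to a small neighborhood of the figure eight. Finally, at the terminal node, a separate Newton-Kantorovich argument centered on a validated Fourier approximation of the eight identifies the endpoint of the arc with the Moore-Chenciner-Montgomery solution, local uniqueness within the $P_{12}$ symmetry class completing the identification.

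The main obstacle will be the global continuation in the middle, not the two endpoints. As the orbit deforms from the rigid equilateral triangle into the strongly non-planar and highly elongated eight, the three bodies come close enough to each other that the bounds on the derivatives of the gravitational $1/r^{3}$ vector field grow rapidly, inflating the Lipschitz constants entering the radii polynomial. To close the inequalities over the full arc one must combine a very high Fourier truncation, an adaptive and locally very fine continuation step, careful tuning of the geometric weights in the $\ell^{1}$ Banach space to match the analyticity strip of the family, and aggressive use of the symmetry reduction to keep the linear algebra tractable. Handling the desingularized bifurcation at Lagrange, and matching the co-rotating representation used along the family with the one used for the eight at the terminal step, are secondary technical issues but also demand attention.
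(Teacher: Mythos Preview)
Your high-level strategy---symmetry reduction to the $P_{12}$ subspace, a Fourier zero-finding problem in a weighted $\ell^{1}$ space, desingularization at the Lagrange end via an out-of-plane amplitude, and a Newton--Kantorovich/radii-polynomial contraction verified with interval arithmetic---matches the paper's architecture closely. Two substantive differences are worth flagging.

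First, the paper does \emph{not} do segment-by-segment pseudo-arclength continuation. Instead it expands the entire branch as a single Fourier--Chebyshev series, Fourier in $t$ and Chebyshev in the rotation frequency $\Omega\in[0,1]$, and proves one global contraction in the product space. The authors explicitly contrast this with the piecewise-linear approach you propose, which they say forces very small steps and high cost. Your plan is not wrong in principle, but the paper's single-shot Chebyshev formulation is what lets the whole proof run in under an hour; your anticipated ``main obstacle'' is precisely what their choice of parameterization sidesteps. A related point: the paper introduces an auxiliary variable $w=\|L_{\sqrt{a}}(u-Su)\|^{-1}$ to turn the $1/r^{3}$ force into a polynomial system, so that all estimates reduce to Banach-algebra products; you do not mention how you control the non-polynomial nonlinearity, and this is exactly where your Lipschitz constants blow up.

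Second, and more seriously, you desingularize only at the triangle. The figure eight is \emph{also} a symmetry-breaking endpoint: at $\Omega=0$ the rotating-frame equations acquire an $x$-translation invariance, so the eight is not isolated even inside the $P_{12}$ symmetry class, and a plain Newton--Kantorovich step at the terminal node will fail for lack of invertibility of $DF$. The paper handles this by imposing the integral constraint $\int_{0}^{2\pi}U_{x}\,dt=0$ along the whole branch together with an unfolding parameter $\beta$, and then proves a posteriori that $\beta\equiv 0$. You need an analogous phase condition at the eight, not just a matching of representations.
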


This theorem, combined with Proposition 3 in \cite{MR4208440}, implies the existence of a dense subset of $P_{12}$, corresponding to choreographies in the inertial frame with arbitrarily large period and lying on the surface of a topological cylinder.

The proof of Theorem \ref{thm:MarchalWasRight} is constructive, and proceeds by several interconnected steps.

\begin{enumerate}
\item \textbf{Symmetric function space reduction:} we reformulate the problem in a rotating
frame and impose (in Fourier space) the symmetries of Marchal's $P_{12}$ family.
This, in particular, removes all three body bifurcations which do not result in relative choreographies.
The result is a system of algebraic advanced/delay differential equations with discrete symmetries.
Our task is to establish the existence of a branch of solutions of the symmetrized problem, parameterized by the frequency of the rotating frame, starting from the Lagrange triangle and extending to the figure eight.

\item \textbf{Desingularization at the endpoints:} the Lagrange triangle and the figure eight are both planar solutions of the three body problem, and the desired $P_{12}$ branch is transverse to each of these planes (see the right frame of Figure \ref{fig:triangle_eight_1}).
Despite the discrete symmetries mentioned above, and because of the planar symmetry, neither the triangle nor the eight are isolated as choreographies.
Put another way, the $P_{12}$ branch begins and ends at a symmetry breaking bifurcation, and because of this it is necessary to 
desingularize the problem at each endpoint.
We introduce an ``unfolding parameter'' which desingularizes this symmetry breaking along the entire branch.

\item \textbf{A posteriori argument:} we numerically compute a candidate branch of solutions to the desingularized symmetrized problem (see the left frame of Figure \ref{fig:triangle_eight_1}), and apply a uniform contraction argument in a neighborhood of this candidate.
In this way, we prove the existence of a true branch of solutions near (in an appropriate norm) the candidate solution branch.
The procedure is carried out in a Banach space of rapidly decaying Fourier-Chebyshev series coefficients, where we are able to express the entire solution branch using a single Chebyshev expansion in the frequency parameter.

\item \textbf{Analytical bound derivation:} deriving the analytical bounds needed for the uniform contraction argument constitutes a significant portion of the paper.
The assumptions of the uniform contraction theorem are distilled down to a list of inequalities, involving the Fourier-Chebyshev
coefficients of the candidate branch.
Ultimately, verifying these hypotheses demands a very large, but finite number of calculations.

\item \textbf{Interval arithmetic:} given the significant computational complexity of the inequalities, and the large number of Fourier-Chebyshev coefficients representing our candidate, manual calculations are not feasible. Instead, we use a computer for this task, employing an interval arithmetic library \cite{IntervalArithmeticJulia} as well as the \texttt{RadiiPolynomial} library \cite{Henot2021-2} implemented in the Julia programming language \cite{Julia}.
Readers interested in independently verifying the necessary inequalities at the final step of the proof can use the following Jupyter Notebook \cite{Henot2024}:

 \url{https://github.com/OlivierHnt/MarchalConjecture.jl}

The necessary checks are completed in less than an hour on a laptop with a M1 chip and 8GB of RAM.
\end{enumerate}

We also recall the remarks of Marchal at the end of Section 4 in \cite{P12family} (and the closing remarks of Section 4.2 in \cite{craig2008hamiltonian}), where it is explained that if the Lagrange triangle can be continued to the figure eight through the $P_{12}$ family, then -- by further symmetry considerations -- the family continues 
through the figure eight, and returns to the Lagrange triangle with opposite orientation.
Following the branch further leads back to the figure eight (with reversed orientation) and finally back to the Lagrange triangle.
So, the branch proved to exist in the present work is precisely a quarter of the $P_{12}$ family and, as a corollary to Theorem \ref{thm:MarchalWasRight}, we obtain the global description of $P_{12}$.

\begin{figure}[!ht]
\centering
\begin{subfigure}[b]{0.43\textwidth}
\centering
\includegraphics[width=\textwidth]{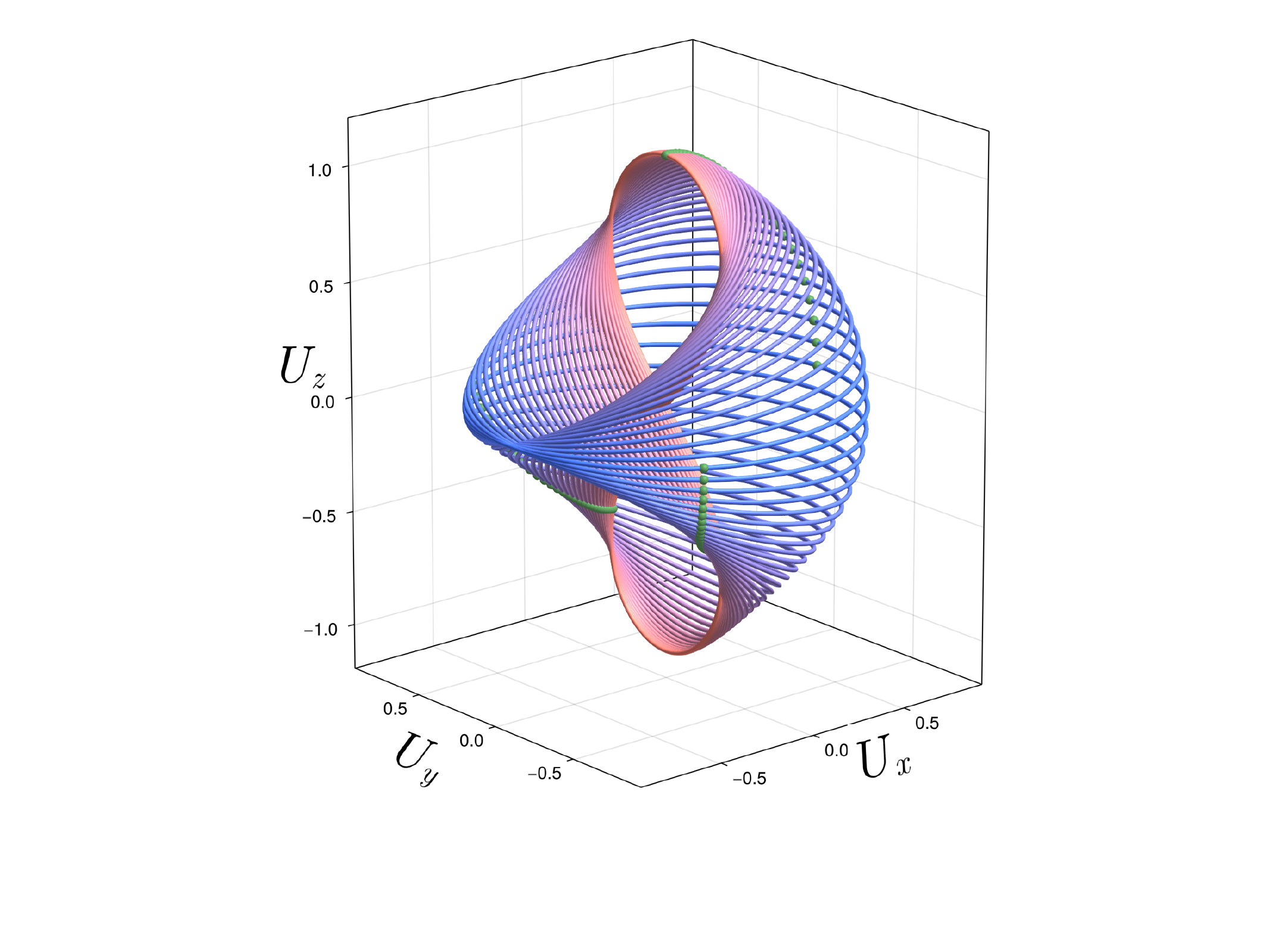}
\label{fig:P12_family}
\caption{Thirty-three orbits sampled from the continuous branch
of solutions proven to exist by Theorem \ref{thm:MarchalWasRight}.
Each curve is a relative periodic choreography of the three body problem 
with equal masses.}
\end{subfigure}
\hspace{0.1\textwidth}
\begin{subfigure}[b]{0.45\textwidth}
\centering
\includegraphics[width=\textwidth]{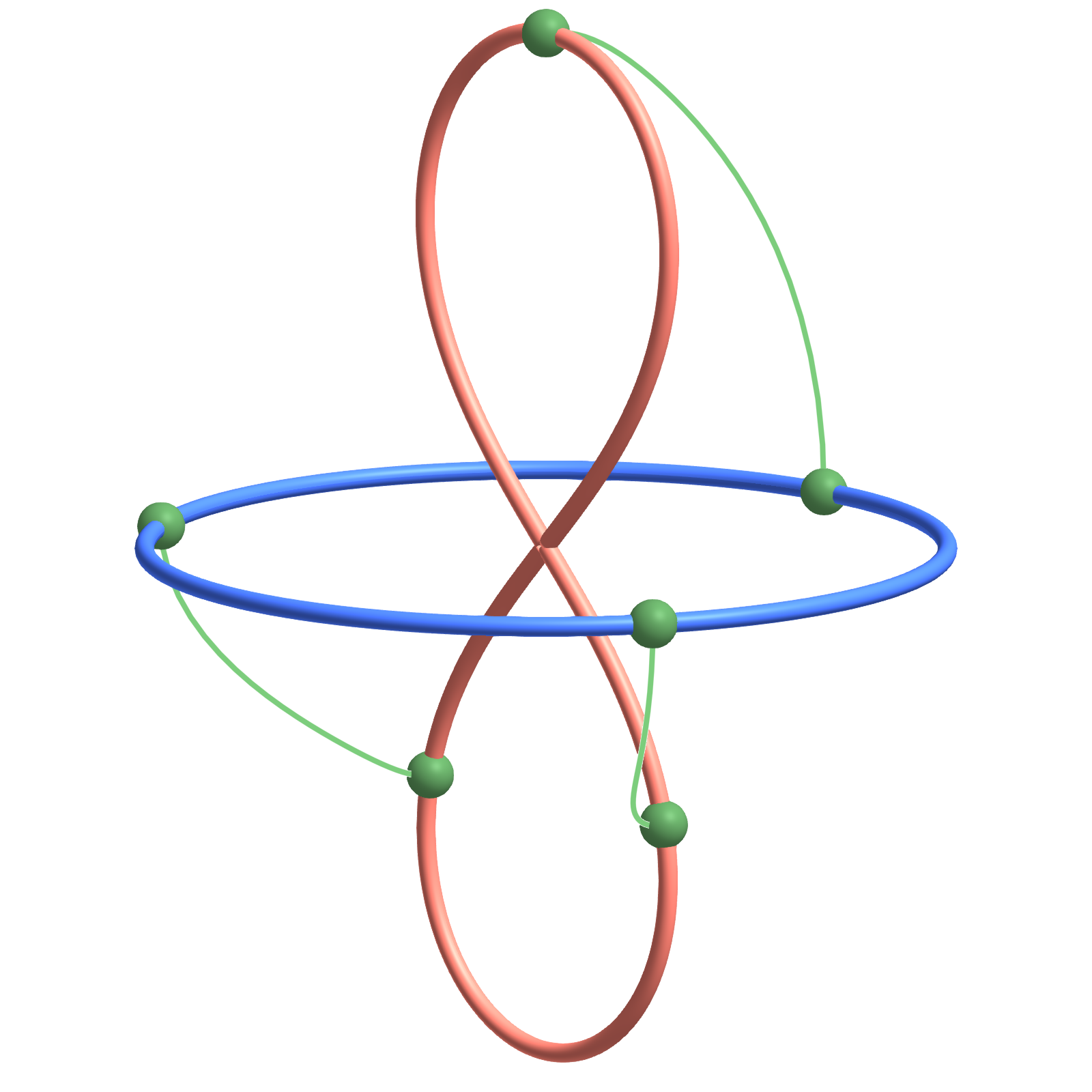}
\caption{The Lagrange triangle (blue) and the figure eight (red).
Each of these choreographies is planar, and the $P_{12}$ family -- comprised of relative choreographies -- is transverse to both planes.}
\end{subfigure}
\caption{Branch of relative choreographies connecting the Lagrange 
triangle to the figure eight. A dense set of the relative choreographies in the family corresponds to choreographies in inertial frame.}
\label{fig:triangle_eight_1}
\end{figure}


The remainder of the paper is structured as follows.
Section \ref{sec:setUp} covers the symmetry reduction, the desingularization at the figure eight and the Lagrange triangle, and the treatment of non-polynomial nonlinearities.
These concepts collectively reformulate the problem as a functional equation.
In Section \ref{sec:proof}, we define the norms and establish the necessary bounds to prove the existence of a branch of solutions of the functional equation using the contraction mapping theorem.
Section \ref{sec:aposteriori} completes the demonstration of Marchal’s conjecture by establishing that the branch of zeros identified in Section \ref{sec:proof} corresponds to the $P_{12}$ family of Marchal and ends at the figure eight of Chenciner and Montgomery.
Section \ref{sec:future} concludes the paper by outlining potential future work.

\subsection{Some historical remarks}

The first choreographic solution of the $N$-body problem appeared in Lagrange's 1772 publication of the equilateral triangle solution of the $3$-body problem \cite{lagrangeTriangle}.
For appropriate initial velocities, this special solution involves three bodies of equal mass moving in circular motion at constant speed.
The result was generalized more than a century later by Hoppe \cite{nGonPaper}, who showed that for every $N$ many bodies there is a circular periodic orbit where the $N$ equal masses are located at the vertices of a rotating regular $N$-gon.
Today we would refer to all of these polygonal solutions as trivial choreographies.
It is a testament to the richness of the gravitational $N$-body problem that it took more than 200 years for a non-trivial choreography to be discovered by Moore \cite{DiscoveryEight}.

The story of Marchal's conjecture picks up in the Fall of 1999 when Chenciner, Gerver, Montgomery and Sim\'{o} were all congregated at the Paris Observatory's Institut de M\'{e}canique C\'{e}leste et de Calcul des \'{E}ph\'{e}m\'{e}rides (IMCCE).
Montgomery had recently published the paper \cite{MR1610784}, where he used variational arguments to prove the existence of many braided periodic solutions of the $N$-body problem with the strong $1/r^a$ potential with $a=2$.
He further conjectured that many of these braids should continue to the Newtonian potential taking $a \to 1$.

Chenciner and Montgomery worked on these and similar problems in the winter of 1999 and, as recalled in the introduction of \cite{MR1919833}, by December they had proven the existence of the three body eight for the Newtonian potential.
This sets off -- in the words of the same introduction -- a flurry of work, and soon Gerver and Sim\'{o} had computed a whole zoo of new $N$-body choreographies for $N$ as large as $799$.
It was during this period that Sim\'{o} actually coined the term \emph{choreography}.

Later the same month, at a conference on Celestial Mechanics in Evanston, Illinois, honoring the 60th birthday of Donald Saari, Chenciner gave a presentation announcing the proof of the figure eight.
According to the first paragraph of \cite{P12family}, Marchal was in attendance.
Indeed, in his 2010 M\'{e}moire d'Habilitation \cite{Fejoz}, F\'{e}joz recalls that upon hearing the talk, Marchal immediately realized that the figure eight could be related to the Lagrange triangle through his $P_{12}$ family, and Marchal's conjecture was born.
The conjecture is referred to as ``Partially Numerical Theorem $19$'' in \cite{Fejoz}, and its status is reviewed in detail in Section 3.4 of the same document.
See also the 2009 paper \cite{MR2480953} of Chenciner and F\'{e}joz for a detailed numerical evidence supporting the conjecture.

After Evanston, interest in choreographic solutions spread rapidly.
In 2000 the paper of Chenciner and Montgomery, describing the proof of the figure eight, appeared in the Annals of Mathematics \cite{ProofEight}.
Marchal's paper \cite{P12family}, about the symmetries of the figure eight, appeared in Celestial Mechanics and Dynamical Astronomy the same year.
This marked the first appearance of the conjecture in print, and from this point on, the mathematical theory progressed as discussed in the previous section.

During the next two years, the papers \cite{MR1905315,MR1884902} by Sim\'{o} provided detailed numerical accounts of many additional symmetric and asymmetric families of $N$-body choreographies, and reported the surprising numerical result that the three body figure eight choreography appeared to be stable in the Hamiltonian sense.

We also mention that, as recalled in both the references of \cite{ProofEight} and the acknowledgements of \cite{MR1919833}, it was Phil Holmes and Robert MacKay who brought the paper \cite{DiscoveryEight} of Moore to the attention of Chenciner and Montgomery.
This apparently happened during the galley proofing of \cite{ProofEight}.
Chenciner and Montgomery in turn seem to have brought the paper of Moore to the attention of a much larger audience than before.
The paper seems to have been cited seven times between 1993 and 2000, and has more than three hundred citations by the date of the present manuscript.

It should also be noted that the first computer-assisted existence theorems for choreographies appeared fairly soon after the Evanston conference.
The paper \cite{MR2012847} of Kapela and Zgliczy\'{n}ski, appearing in Nonlinearity in 2003, reproved the existence of the figure eight and also established its convexity (a difficult property to obtain using variational methods).
In the same reference they also prove the existence of some symmetric planar 4, 6, and 8 body choreographies which had appeared earlier in the numerical studies of Sim\'{o} \cite{MR1905315,MR1884902}.
Later in \cite{MR2312391}, Kapela and Sim\'{o} proved the linear stability of the figure eight, and the existence of many additional non-symmetric choreographies from \cite{MR1905315,MR1884902}.
In \cite{MR3622273}, the same authors prove the KAM stability of the figure eight in an appropriate energy/symmetry submanifold.
Along similar lines, the dissertation \cite{minton2013computer} presents a computer-assisted framework for proving the existence of planar choreographies; a charming feature of this work is an interactive interface that allows the user to draw a candidate loop, and, upon convergence of Newton’s method from the sketch, tries to establish a rigorous existence proof.

Furthermore, we mention the work \cite{MR4208440} by four of the authors of the present manuscript.
This paper provides computer-assisted existence proofs for spatial torus knot choreographies (including a five body trefoil knot), and laid the foundation for the current work.
More precisely, the computer-assisted proofs in the reference just cited employ a functional analytic framework based on advanced/delay equations similar to the one discussed in Section \ref{sec:setUp}.
We note however that the results of \cite{MR4208440} are localized in parameter space, while the results of the present describe a global branch containing the triangle and the eight.

Lastly, we mention the papers of Arioli, Barutello, and Terracini \cite{Arioli2006,Barutello_2004}, where the authors study choreographic solutions of the three body problem using computer-assisted methods similar to those mentioned above.
Specifically, the work in \cite{Arioli2006} begins with a variational proof of a choreographic mountain pass solution between the Lagrangian minimizers (circular choreographies) of period $2\pi$ and $4\pi$ respectively.
Next, they use a numerical bisection method to approximately locate this mountain pass bifurcation.
Finally, they employ the validated continuation scheme developed by Arioli and Koch in \cite{MR2679365} to establish the existence of a continuous branch of mountain pass solutions near the numerically detected bifurcation point.
Here, it is essential to mention that, at the end of their introduction, the authors of \cite{Arioli2006} themselves explain that their family cannot be the $P_{12}$ family of Marchal.

The preceding discussion is by no means a complete review of the literature on choreographic orbits in celestial mechanics.
An excellent overview by Montgomery, with many additional references, is found at \cite{scholarpedia}.
Similarly, computer-assisted methods based on a posteriori validation have a long history going back to the works of Lanford, Eckmann, Koch, and Wittwer in the mid-1980's on the Fegenbaum conjectures \cite{MR0648529,MR0883539,MR0727816}.
The use of such computer-assisted methods in nonlinear analysis and dynamical systems theory is by now a thriving subindustry, and to attempt even a terse overview would be a task beyond the scope of the present paper.
We refer the interested reader to the review articles of \cite{MR0759197,MR1420838,MR3444942,MR3990999} and to the books of \cite{MR2807595,MR3822720,MR3971222} for thorough discussion of this literature.


\section{Reformulation of the problem as a functional equation}
\label{sec:setUp}

We consider the motion of three bodies interacting under Newton's gravitational law, depending on their mass, initial position and initial velocity.
The positions $q_1, q_2, q_3$ are solutions of the equations of motion
\begin{equation}\label{eq:N_body}
m_j \ddot{q}_j = - \partial_{q_j} U,
\qquad U \bydef  - \sum_{j < l} \frac{G m_j m_l}{\| q_j - q_l \|},
\qquad j = 1, 2, 3,
\end{equation}
where $(q_1, q_2, q_3) \in\{ (\mathbb{R}^3)^3 \, : \, q_1 \neq q_2, \, q_1 \neq q_3, \, q_2 \neq q_3 \}$ and $G$ denotes the gravitational constant.

Changing the units of length, mass and time by the factors $A, B, C$ respectively allows to scale $G = 1$ under the condition $A^3 = B C^2$.
We study the case of equal masses, and, by altering the unit of mass, we scale $m_1 = m_2 = m_3 = 1$, leaving out one more degree of freedom.
Then, the motion of the bodies is described by the system
\begin{equation}\label{eq:N_body_normalized}
\ddot{q}_j = - \sum_{j \neq l} \frac{q_j - q_l}{\| q_j - q_l \|^3}, \qquad j = 1, 2, 3.
\end{equation}

We set the inertial frame by placing the origin at the center of mass of the Lagrange triangle, whose plane is associated with the horizontal $xy$-plane.
Due to our scaling of mass, the Lagrange triangle constitutes a relative equilibrium of \eqref{eq:N_body_normalized}, rotating horizontally with frequency $1$, whose vertices are located at
\begin{equation}\label{eq:triangle}
q_j (t)
= 3^{-1/6} e^{(t+2\pi j/3)\bar{J}} e_1
= 3^{-1/6}
\begin{pmatrix}
\cos(t+2\pi j/3) \\ \sin(t+2\pi j/3) \\ 0
\end{pmatrix},
\qquad j = 1, 2, 3,
\end{equation}
where $e_1 \bydef (1, 0, 0)$ and the matrix $\bar{J}$ is defined below in \eqref{eq:I_J_matrices}. 
This configuration forms a circular choreography such that $q_j (t)=q_3 (t+2\pi j/3)$ for $j = 1, 2$.
The $P_{12}$ family studied by Marchal is a family of solutions of \eqref{eq:N_body_normalized} in a horizontally rotating frame, parameterized by the rotation frequency, see \cite{P12family} for details.
For this purpose, we introduce the matrices
\begin{equation}\label{eq:I_J_matrices}
\bar{I} \bydef
\begin{pmatrix}
1 & 0 & 0\\
0 & 1 & 0\\
0 & 0 & 0
\end{pmatrix}, \qquad
\bar{J} \bydef
\begin{pmatrix}
0 & -1 & 0\\
1 & 0 & 0\\
0 & 0 & 0
\end{pmatrix}.
\end{equation}
Then the positions of the bodies in Marchal's rotating frame coordinates are given by
\begin{equation}\label{eq:rotating_coordinates}
U_j (t,\Omega) = e^{\Omega t\bar{J}} q_j (t).
\end{equation}
Under a scaling of time (using our last degree of freedom), we consider $U_1, U_2, U_3$ to have period $2\pi$.
Note that in the inertial frame, the solution $q_1, q_2, q_3$ may be quasiperiodic, depending on $\Omega$.
Proposition~3 in \cite{MR4208440} implies that there are infinitely many choreographies, in the inertial frame, lying on the surface of a topological cylinder.

\subsection{Symmetry reduction}

The $P_{12}$ family consists of a family of a specific class of periodic solutions with the \emph{choreography symmetry}
\begin{equation}\label{eq:layout_symmetry}
U_j (t,\Omega) = U_3 (t+4\pi j/3,\Omega),
\qquad j = 1, 2.
\end{equation}
To avoid carrying out an additional subscript, we let $U = U_3$.
Given that the masses of the three bodies are equal, in the rotating frame \eqref{eq:rotating_coordinates} the equations are equivariant under permutations and time-shifts.
Therefore, a solution satisfying \eqref{eq:rotating_coordinates} and \eqref{eq:layout_symmetry} is represented as a solution to the system of delay differential equations
\begin{equation}\label{eq:N_body_delay}
0 = G(U) \bydef
\ddot{U} - 2 \Omega \bar{J} \dot{U} - \Omega^2 \bar{I} U +
\sum_{j=1}^2 \frac{U-S^jU}{\| U - S^j U \|^3},
\end{equation}
where
\begin{equation}\label{eq:shift_definition}
[S^jU](t,\Omega) \bydef U(t + 4\pi j/3, \Omega).
\end{equation}
Notice that in these coordinates, the triangular relative equilibrium \eqref{eq:triangle} satisfies the symmetries \eqref{eq:layout_symmetry} and is represented by
\[
U(t, 1) = 3^{-1/6} e^{2t\bar{J}}e_{1}.
\]

For the sake of simplicity, in this section we study formally the operator $G : Y \to Y$ in the space
\begin{equation}
Y \bydef C^\infty(\mathbb{R}/2\pi\mathbb{Z} \times [0,1], \mathbb{R}^3),
\end{equation}
and conscientiously do not prescribe a norm.
In Section \ref{sec:proof}, we will use a Banach space of analytic functions contained in $Y$.
The operator $G$ is equivariant under the action of the group $(\sigma,\tau) \in \mathbb{Z}_2 \times \mathbb{Z}_2$ in $Y$ given by
\begin{align}
\sigma \cdot U(t, \Omega) &\bydef R_y U(-t, \Omega),\\
\tau \cdot U(t, \Omega) &\bydef R_z U(t+\pi, \Omega),
\end{align}
where
\begin{equation}
R_y \bydef
\begin{pmatrix}
1 & 0 & 0\\
0 & -1 & 0\\
0 & 0 & 1
\end{pmatrix}, \qquad
R_z \bydef
\begin{pmatrix}
1 & 0 & 0\\
0 & 1 & 0\\
0 & 0 & -1
\end{pmatrix}.
\end{equation}
Notice that the fixed-point space of $\mathbb{Z}_2 \times \mathbb{Z}_2 $ is
\begin{equation}
\mathcal{U} \bydef \Big\{U \in Y \, : \, U(-t,\Omega) = R_y U(t,\Omega)\text{ and }U(t+\pi,\Omega) = R_z U(t,\Omega)\Big\}.
\end{equation}
This equivariance property implies that we can find solutions by restricting the operator to $\mathcal{U}$,
\[
G : \mathcal{U} \to \mathcal{U}.
\]
\begin{remark}
{\em
Many of the necessary symmetries can be imposed directly in 
coefficient space.  In fact, it can be seen that 
a function $U \in \mathcal{U}$ with Fourier coefficients given by 
\begin{equation}\label{eq:U_sym}
U(t, \Omega) =
\sum_{k \in 2\Z}
\begin{pmatrix}
a_{1,k}(\Omega) \cos kt \\ b_{2,k}(\Omega) \sin kt \\ 0
\end{pmatrix}
+ 
\sum_{k \in 2\Z+1}
\begin{pmatrix}
0 \\ 0 \\ a_{3,k}(\Omega) \cos kt
\end{pmatrix},
\end{equation}
has the symmetries of the $P_{12}$ family.  
}
\end{remark}

We will prove that Marchal's $P_{12}$ family begins at the Lagrange triangle with $\Omega = 1$ and ends at the figure eight with $\Omega=0$ by studying periodic solutions $U \in \mathcal{U}$ of \eqref{eq:N_body_delay} for $\Omega \in [0, 1]$.
In particular, we will verify in our proof that we have a figure eight choreography when $\Omega = 0$ as described by the main theorem of \cite{ProofEight}.
The characterization of the eight-shape choreography is reported in the next lemma.

\begin{lemma}\label{lem:eight_shape}
For $\Omega=0$, let $U=(0,U_y,U_z) \in \mathcal{U}$ be a solution of equation \eqref{eq:N_body_delay} with $U_x = 0$.
Then, $q = (q_1, q_2, q_3)$ with $q_j = S^j U$ is a planar choreography solution of the $3$-body problem with the following properties:
\begin{enumerate}
\item[(i)]
The linear momentum of the solution is zero, i.e.
\begin{equation}
\sum_{j=1}^3 q_j (t) = 0.
\end{equation}
\item[(ii)]
The solution in the $yz$-plane translated by $\pi/2$, $\hat{q} (t) \bydef (U_z (t+\pi/2), U_y (t+\pi/2))$, satisfies the symmetries of the figure eight choreography proven in \cite{ProofEight}, i.e.
\begin{align}
\hat{q}(t+\pi) &= (-\hat{q}_{1}(t),\hat{q}_{2}(t)), \\
\hat{q}(-t+\pi) &= (\hat{q}_{1}(t),-\hat{q}_{2}(t)).
\end{align}
\item[(iii)]
The angular momentum is zero, i.e.
\begin{equation}
\sum_{j=1}^3 q_j (t) \times \dot{q}_j (t) = 0.
\end{equation}
\item[(iv)]
If in addition $U \times \dot{U}$ is nowhere vanishing for all $t \in (0, \pi/2)$, then the orbit of $q(t)$ is an eight-shape figure.
\end{enumerate}
\end{lemma}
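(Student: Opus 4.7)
My plan is to treat the four claims in order, using the two generating symmetries $U(-t)=R_y U(t)$ and $U(t+\pi)=R_z U(t)$ defining $\mathcal{U}$, together with the Fourier structure \eqref{eq:U_sym}. Throughout I would use that each $q_j=S^j U$ satisfies the standard Newton system \eqref{eq:N_body_normalized}, a fact obtained by applying $S^j$ to \eqref{eq:N_body_delay} at $\Omega=0$.

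For (i), each component of $U$ has vanishing time average over $[0,2\pi]$: $U(t+\pi)=R_z U(t)$ kills the integral of $U_z$, $U(-t)=R_y U(t)$ kills the integral of $U_y$, and $U_x\equiv 0$ by assumption. Newton's third law yields $\sum_j\ddot{q}_j=0$, so the center of mass is affine in $t$; being also $2\pi$-periodic it is constant; and since each $q_j$ has the same time average as $U$, the constant must vanish. For (ii) I would substitute directly into the two symmetries: the first identity follows from $U(s+\pi)=R_z U(s)$ with $s=t+\pi/2$, and the second by combining $U(-s)=R_y U(s)$ with $2\pi$-periodicity at $s=t-3\pi/2$. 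For (iii), conservation of angular momentum lets me replace $L=\sum_j q_j\times\dot q_j$ by its time average, and a change of variable in each summand reduces this to a multiple of $\int_0^{2\pi} U\times\dot U\,dt$. With $U_x\equiv 0$ only the $x$-component can survive, and it is a linear combination of products of $U_y$ or $\dot U_y$ (Fourier support in $2\mathbb{Z}$ by \eqref{eq:U_sym}) with $U_z$ or $\dot U_z$ (support in $2\mathbb{Z}+1$); by convolution these products have Fourier support in $2\mathbb{Z}+1$, hence no constant term, so $L=0$.

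Part (iv) is where I expect the main effort. Evaluating the two symmetries at $t=0$ forces $U(0)=(0,0,z_0)$, while composing them to get $U(\pi-t)=R_y R_z U(t)$ and setting $t=\pi/2$ forces $U(\pi/2)=0$; hence the body visits the origin exactly at $t=\pi/2$ within $[0,\pi/2]$. These same symmetries assemble the full orbit on $[0,2\pi]$ from four isometric copies of the quarter arc on $[0,\pi/2]$, one in each quadrant of the $yz$-plane. The hypothesis that $U\times\dot U$ is nonvanishing on $(0,\pi/2)$ yields both $U\neq 0$ on this open interval and strict monotonicity of the polar angle $\theta$ in the $yz$-plane, from which one should obtain injectivity of the quarter arc. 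Reassembling the four reflected copies then produces a closed curve that is injective away from the origin and meets itself transversally there, which is the figure-eight shape. The delicate step is the quarter-arc injectivity: strict monotonicity of $\theta$ only gives injectivity modulo winding, so one must rule out the body completing a full loop on $[0,\pi/2]$. I expect this to follow from the limiting direction of $U(t)/\|U(t)\|$ as $t\to\pi/2^-$ — which, since $U(\pi/2)=0$, is controlled by $\dot U(\pi/2)$ — together with the initial tangent at $t=0$, pinning the arc to a single quadrant and bounding the total angular sweep below a full turn.
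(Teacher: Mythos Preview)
Your arguments for (i) and (ii) coincide with the paper's.

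For (iii) you take a genuinely different route. The paper evaluates the angular momentum at the single instant $t=\pi/2$: there the symmetries force one body to the origin while the other two sit at antipodal positions with equal velocities, so the three contributions $q_j\times\dot q_j$ either vanish or cancel in pairs. Your time-average argument, exploiting the disjoint even/odd Fourier supports of $U_y$ and $U_z$ coming from \eqref{eq:U_sym}, is correct and more algebraic; the paper's computation is shorter and purely geometric.

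For (iv) the paper does not argue directly at all: it simply cites the corollary to Lemma~7 of Chenciner--Montgomery, which is precisely the statement that nonvanishing of $q\times\dot q$ away from the crossing forces the eight shape. Your attempt to reprove this has a gap. First, strict monotonicity of the lifted polar angle already gives injectivity of the quarter arc outright, not merely ``modulo winding'' as you write; the real obstruction to the eight shape is that the four reflected copies of the arc might intersect one another, and ruling this out amounts to confining the arc to a half-plane, equivalently bounding its total angular sweep by~$\pi$. Your proposed mechanism --- the tangent direction at $t=0$ together with the limiting direction $\dot U(\pi/2)$ as $t\to\pi/2^-$ --- cannot deliver this bound: these data determine the endpoints of the lifted angle $\theta:(0,\pi/2)\to\mathbb{R}$ only modulo~$2\pi$, so they do not exclude extra full turns in between. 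To close (iv) you must either invoke the Chenciner--Montgomery lemma as the paper does, or reproduce its content, which treats the full lobe as a closed curve starshaped about the origin and uses more than the endpoint tangents.
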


\begin{proof}
\begin{enumerate}
\item[$(i)$] The conservation of linear momentum implies that
\[
\sum_{j=1}^3 q_j (t) = c t+b.
\]
Since our solution is periodic $c = 0$.
By the symmetries, the $0$-th Fourier modes of $U_y$ and $U_z$, for $U = (U_x, U_y, U_z) \in \mathcal{U}$, are zero.
Also, by assumption $U_x$ is zero, thus the $0$-th Fourier coefficient of $q_j$ is $0$, which implies that $b = 0$.

\item[$(ii)$] For a $yz$-planar solution $U(t) = (0, U_y, U_z) \in \mathcal{U}$ we have that $\hat{q}$ has the symmetries $\hat{q}(t+\pi) = (-\hat{q}_{1}(t),\hat{q}_{2}(t))$ and
\[
\hat{q}(-t+\pi)
= (U_z (-t-\pi/2), U_y (-t-\pi/2))
= (U_z (t+\pi/2), -U_y(t+\pi/2))
= (\hat{q}_1(t), -\hat{q}_2(t)).
\]
These two symmetries agree with the ones found in \cite{ProofEight}.

\item[$(iii)$] By $(ii)$, the symmetries agree with \cite{ProofEight}, see also Remark 3.2 in \cite{MR2012847}.
Using both symmetries one has that $\hat{q}(-t) = -(\hat{q}_1 (t),\hat{q}_2 (t))$, which implies that at time $\pi / 2$ we have the following relations
\begin{align*}
q_1 (\pi/2) &= (0,0,0),\\
q_2 (\pi/2) &= -q_3 (\pi/2) = (0,*,*),\\
\dot{q}_{2}(\pi/2) &= \dot{q}_3 (\pi/2) = (0, *, *),
\end{align*}
where $ (0,*,*)$ denotes a vector whose first component vanishes. 
From these, it is straightforward to verify that the angular momentum is zero at $\pi / 2$.

\item[$(iv)$]It remains to be prove that the shape of the orbit is a figure eight without extra small loops or other unpleasant features.
This fact is an immediate consequence of the corollary following Lemma~7 in \cite{ProofEight}, which requires the assumption in $(iv)$. \qedhere
\end{enumerate}
\end{proof}

\subsection{Desingularization}\label{sec:desingularization}

Marchal's $P_{12}$ family connects to two different families at $\Omega = 1$ and $\Omega = 0$ due to the symmetries of the problem.
At $\Omega=0$ it connects to the family of $x$-translations of the figure eight.
At $\Omega=1$ it connects to the family of homothetic Lagrange triangles.
In this subsection, we augment the system to isolate the $P_{12}$ family from those branches.

\subsubsection{Figure eight}

At $\Omega = 0$, any translation in the $x$-direction of a solution is also solution of \eqref{eq:N_body_delay}.
To isolate the figure eight, we fix the average value of $U_x (t,\Omega)$, with respect to $t$, to be zero along the branch.
To compensate for this restriction, we introduce an unfolding parameter function $\beta=\beta(\Omega)$ in \eqref{eq:N_body_delay}, such that we consider the system
\begin{equation}\label{eq:motion_blowup2}
\beta e_1 - \Omega^2 \bar{I} U - 2 \Omega \bar{J} \dot{U} + \ddot{U} + \sum_{j=1}^2 \frac{U - S^j U}{\| U - S^j U \|^3} = 0,
\end{equation}
with the constraint
\begin{equation}\label{eq:motion_blowup2_condition}
\frac{1}{2\pi} \int_0^{2\pi} U_x (t,\Omega) \, dt = 0,
\qquad \text{for all } \Omega \in [0,1].
\end{equation}

The following lemma shows that periodic solutions of \eqref{eq:motion_blowup2} together with \eqref{eq:motion_blowup2_condition} are equivalent to periodic solutions of \eqref{eq:N_body_delay}.

\begin{lemma}\label{lem:beta}
If $U = (U_x, U_y, U_z) \in \mathcal{U}$ is a periodic solution of \eqref{eq:motion_blowup2} subjected to the condition \eqref{eq:motion_blowup2_condition}, then $\beta(\Omega)=0$ for all $\Omega \in [0, 1]$.
\end{lemma}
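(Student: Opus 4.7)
The plan is to exploit Newton's third law by summing the equation \eqref{eq:motion_blowup2} over the three cyclic shifts $S^j U$, $j = 1, 2, 3$, so that the nonlinear interaction terms cancel pairwise, then integrate over one period to eliminate the derivative terms and isolate $\beta$.

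More precisely, I would first note that for each $k \in \{1, 2, 3\}$ the shifted profile $V_k \bydef S^k U$ again satisfies equation \eqref{eq:motion_blowup2} with the nonlinear sum $\sum_{j=1}^2 \frac{V_k - S^j V_k}{\|V_k - S^j V_k\|^3}$, because $S$ commutes with $\partial_t$, with the constant forcing $\beta e_1$, and with the linear matrix operators $\bar{I}$ and $\bar{J}$. Summing these three copies of \eqref{eq:motion_blowup2} and setting $\Sigma \bydef V_1 + V_2 + V_3 = \sum_{k=1}^3 S^k U$ yields
\begin{equation*}
\ddot{\Sigma} - 2 \Omega \bar{J} \dot{\Sigma} - \Omega^2 \bar{I} \Sigma + 3 \beta e_1 = 0,
\end{equation*}
since $S^j V_k = V_{k+j}$ (indices modulo $3$) and the six terms in the double sum pair up as $\frac{V_k - V_l}{\|V_k - V_l\|^3}$ and $\frac{V_l - V_k}{\|V_l - V_k\|^3}$, which cancel.

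Next I would integrate this identity over $[0, 2\pi]$. Since $U$ is $2\pi$-periodic, so is $\Sigma$, and hence $\int_0^{2\pi} \ddot{\Sigma} \, dt = \int_0^{2\pi} \dot{\Sigma} \, dt = 0$. Moreover, the translation invariance of the integral gives $\int_0^{2\pi} \Sigma \, dt = 3 \int_0^{2\pi} U \, dt$. This reduces the identity to
\begin{equation*}
-3\Omega^2 \bar{I} \int_0^{2\pi} U \, dt + 6\pi \beta e_1 = 0.
\end{equation*}
The matrix $\bar{I}$ projects onto the $xy$-plane, so I need only control the $x$- and $y$-components of $\int_0^{2\pi} U \, dt$. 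The $x$-component vanishes by the imposed constraint \eqref{eq:motion_blowup2_condition}. The $y$-component vanishes because the symmetry $U(-t, \Omega) = R_y U(t, \Omega)$ defining $\mathcal{U}$ forces $U_y$ to be odd in $t$, so its mean over a period is zero. Taking the $x$-component of the resulting equation then gives $6\pi \beta = 0$, hence $\beta(\Omega) = 0$ for every $\Omega \in [0,1]$.

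There is no serious obstacle here: the argument is a conservation-of-momentum computation in the rotating frame, and the only point that requires some care is that the conclusion $\beta = 0$ must be obtained uniformly in $\Omega \in [0, 1]$, including at the degenerate value $\Omega = 0$. The summation over the three bodies handles this automatically, since even when the $\Omega^2 \bar{I} \Sigma$ contribution collapses at $\Omega = 0$, the constant term $3\beta e_1$ must still have zero mean because $\ddot{\Sigma}$ does.
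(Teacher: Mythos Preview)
Your proof is correct and follows essentially the same approach as the paper: both sum the equation over the three cyclic shifts so that the interaction terms cancel by Newton's third law, integrate over a period to kill the derivative terms, and then use the constraint \eqref{eq:motion_blowup2_condition} on $\int U_x\,dt$ to isolate $\beta$. One minor remark: since $\beta e_1$ lives entirely in the $x$-direction, you never actually need to control the $y$-component of $\int_0^{2\pi} U\,dt$; extracting the $x$-component of your integrated identity immediately gives $6\pi\beta - 3\Omega^2 \int_0^{2\pi} U_x\,dt = 0$, and the constraint finishes the job.
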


\begin{proof}
By the invariance of the $3$-body problem under $x$-translations for $\Omega = 0$, given that $U_j = S^j U$, we have by the conservation of linear momentum in $x$ that
\[
0 = \int_0^{2\pi} \sum_{j=1}^3
\left\langle \ddot{U}_j + \sum_{l \neq j} \frac{U_j - U_l}{\| U_j - U_l \|^3}, e_1 \right\rangle \,dt.
\]
Since this equality still holds for $\Omega \neq 0$, it follows that
\[
0 = \int_0^{2\pi} \sum_{j = 1}^3 S^j
\left\langle \beta e_1 - \Omega^2 \bar{I} U - 2 \Omega \bar{J} \dot{U}, e_1 \right\rangle \,dt
= 6 \pi \beta,
\]
since $\int_0^{2\pi} \dot{U}_{y} \, dt = 0$ by periodicity and $\int_0^{2\pi} U_x \, dt = 0$  by assumption.
This implies $\beta(\Omega) = 0$ for all $\Omega \in [0, 1]$.
\end{proof}

\subsubsection{Lagrange triangle}

At $\Omega=1$, the (planar) homothetic family of the Lagrange equilateral triangle meets the (off-plane) $P_{12}$ family.
The goal of this section is to derive an auxiliary system to \eqref{eq:motion_blowup2} which only retains the $P_{12}$ family.
We use a \emph{blow-up} (as in ``zoom-in'') method (e.g. see \cite{10.1007/s10884-023-10279-x, 10.1137/20M1343464}).
Let $u = (u_1, u_2, u_3)$ be defined by the relation
\begin{equation}
U = L_{\sqrt{a}} u,
\end{equation}
where
\begin{equation}
L_\alpha \bydef
\begin{pmatrix}
1 & 0 & 0\\
0 & 1 & 0\\
0 & 0 & \alpha
\end{pmatrix}.
\end{equation}
Then, the system \eqref{eq:motion_blowup2} becomes
\begin{equation}\label{eq:motion_blowup}
\beta e_1 - \Omega^2 \bar{I} u - 2 \Omega \bar{J} \dot{u} + \ddot{u} + \sum_{j=1}^2 \frac{u-S^j u}{\| L_{\sqrt{a}} u - S^j L_{\sqrt{a}} u \|^3} = 0,
\end{equation}
which we supplement with the conditions
\begin{equation}\label{eq:motion_blowup_condition}
u_3 (0,\Omega) = 1, \qquad
\frac{1}{2\pi} \int_0^{2\pi} u_1 (t,\Omega) \, dt = 0, \qquad
\text{for all } \Omega \in [0, 1].
\end{equation}
For $a \neq 0$, $u = (u_1, u_2, u_3)$ is a periodic solution of \eqref{eq:motion_blowup} if and only if $U = L_{\sqrt{a}} u$ is a periodic solution of \eqref{eq:motion_blowup2}.
The following lemma ensures that the system \eqref{eq:motion_blowup} together with \eqref{eq:motion_blowup_condition} isolates the $P_{12}$ family.

\begin{lemma} \label{lem:triangle}
If $\Omega = 1$ and $a = 0$, in a neighborhood of $u = (u_1, u_2, u_3)$ with
\[
(u_{1},u_{2}) = 3^{-1/6} (\cos2t, -\sin2t),
\]
corresponding to the Lagrange triangle, the equation \eqref{eq:motion_blowup} together with the conditions \eqref{eq:motion_blowup_condition} has an isolated solution $u$.
\end{lemma}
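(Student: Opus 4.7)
The plan is to prove isolation by exploiting a block-triangular structure at $a = 0$ that decouples the problem into a planar sub-system (to which I would apply the implicit function theorem) and a residual scalar linear equation for the vertical coordinate. First, I would exhibit an explicit candidate $u_0 = (3^{-1/6}\cos 2t, -3^{-1/6}\sin 2t, \cos t)$ together with $\beta_0 = 0$, where the planar components form a Lagrange relative equilibrium in the rotating frame (so the pairwise distances equal $3^{1/3}$ and $\|L_0 u_0 - S^j L_0 u_0\|^3 = 3$) and $u_{3, 0} = \cos t$ is the classical vertical Lyapunov eigenmode at Lagrange, whose eigenfrequency matches $\Omega = 1$. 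A direct substitution shows that $u_0$ solves \eqref{eq:motion_blowup} at $(\Omega, a) = (1, 0)$ and satisfies both conditions in \eqref{eq:motion_blowup_condition}.

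The key structural observation at $a = 0$ is that the projection $L_0 = \operatorname{diag}(1, 1, 0)$ annihilates the third coordinate, so every denominator in \eqref{eq:motion_blowup} depends only on $(u_1, u_2)$. Consequently $F_1, F_2$ are independent of $u_3$, while $F_3$ is linear in $u_3$ with coefficients determined by $(u_1, u_2)$. Hence any nearby solution $(u, \beta)$ must have $\beta = 0$ by Lemma \ref{lem:beta}, and its planar components solve an autonomous subsystem that I treat first, before recovering $u_3$ from a decoupled linear equation.

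For the planar block, I would apply the implicit function theorem to the autonomous system in $(u_1, u_2)$ augmented by the $\beta$-unfolding and the constraint $\int u_1 \, dt = 0$. Its linearization at Lagrange within $\mathcal{U}$ is Fredholm of index zero, and its kernel reduces to the infinitesimal symmetries of the Lagrange relative equilibrium that survive the $\mathbb{Z}_2 \times \mathbb{Z}_2$-action: rotations and time shifts drop out because they fail to lie in $\mathcal{U}$, homothety is excluded at the fixed frequency $\Omega = 1$ by Kepler's third law, and only the rigid $x$-translation $\delta u = c e_1$ remains. This direction is absorbed by the unfolding $\delta \beta = c$ and killed by the linearized constraint $\int \delta u_1 \, dt = 2\pi c = 0$. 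The augmented planar operator is therefore an isomorphism, pinning $(u_1, u_2) = (u_{1, 0}, u_{2, 0})$.

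With $(u_1, u_2)$ frozen at the Lagrange triangle, the equation $F_3 = 0$ reduces to the linear problem $L_v u_3 \bydef \ddot u_3 + \tfrac{1}{3}\sum_{j=1}^2 (u_3 - S^j u_3) = 0$ on the odd-cosine Fourier subspace $u_3 = \sum_{k \text{ odd}, \, k \geq 1} c_k \cos kt$ dictated by \eqref{eq:U_sym}. The identity $\sum_{j=1}^2 e^{4\pi i k j /3}$, which equals $2$ when $3 \mid k$ and $-1$ otherwise, diagonalizes $L_v$ with eigenvalues $\alpha_k = -k^2$ or $\alpha_k = -k^2 + 1$ accordingly; the only vanishing eigenvalue is $\alpha_1 = 0$, so $\ker L_v = \operatorname{span}\{\cos t\}$, and the normalization $u_3(0, 1) = 1$ selects $u_3 = \cos t$ uniquely. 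The main technical obstacle I expect is the non-degeneracy claim for the planar block, namely confirming that the classical linearization of the Lagrange relative equilibrium, restricted to the symmetry-adapted Fourier modes of $\mathcal{U}$, has no residual zero eigenvalues beyond the single translation mode absorbed by $\beta$.
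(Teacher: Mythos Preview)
Your proposal is correct and follows the same decoupling strategy as the paper: at $a=0$ the system splits into an autonomous planar subsystem solved by the Lagrange triangle and a linear equation $\ddot u_3 + 3^{-1}\sum_{j=1}^2(u_3 - S^j u_3)=0$ whose periodic solutions in the odd-cosine sector form the one-dimensional family $\operatorname{span}\{\cos t\}$, pinned by the normalization $u_3(0,1)=1$. The paper's own argument is in fact considerably briefer---it simply asserts that the planar equations are solved by the Lagrange triangle and then treats the linear $u_3$ equation---so your implicit-function analysis of the planar block (and the identification of the $x$-translation as the only residual kernel direction absorbed by the $\beta$-unfolding) goes beyond what the paper spells out, while your acknowledged ``main technical obstacle'' is exactly the non-degeneracy step the paper leaves implicit.
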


\begin{proof}
Note that the system \eqref{eq:motion_blowup} becomes decoupled in $(u_1, u_2)$ and $u_3$, such that the equation for $(u_1, u_2)$ is solved by the Lagrange triangle, and for $u_3$ we have the linear differential equation
\[
\ddot{u}_3 + C^{-3} \sum_{j=1}^2 u_3 - S^j u_3 = 0,
\]
where $C = \|(u_1, u_2, 0) - S (u_1, u_2, 0)\| = \|(u_1, u_2, 0) - S^2 (u_1, u_2, 0)\| = 3^{1/3}$.
This linear equation admits a family of periodic solutions, parameterized by its amplitude, and determined by the Lagrange triangle $(u_1, u_2)$.
Hence, the linear scaling $u_3 (0, \Omega) = 1$ characterizes the unique periodic solution $u_3 = \cos t$.
\end{proof}

\subsection{Polynomial embedding}

Polynomial nonlinearities benefit inherently from the Banach algebra structure induced by the Fourier-Chebyshev convolutions.
We employ automatic differentiation techniques (e.g. see \cite{Bucker2006,MR2531684,MR3973675,Henot2021,Jorba2005,Knuth1981,Lessard2016}) to construct an auxiliary system to \eqref{eq:motion_blowup} only comprised of polynomial nonlinearities.

We introduce
\[
v = \partial_t u, \qquad w_j = \| L_{\sqrt{a}} u - S^j L_{\sqrt{a}} u \|^{-1}, \quad \text{for } j = 1, 2.
\]
Notice that the symmetries of $u \in \mathcal{U}$ extend naturally to $v$ and $w$
\begin{align*}
v(t+\pi, \Omega) &= R_z v(t, \Omega), \qquad v(-t, \Omega)=-R_{y}v(t, \Omega), \\
w_j (t+\pi, \Omega) &= w_j (t, \Omega),\qquad w_j (-t, \Omega)=w_{3-j} (t, \Omega).
\end{align*}
The only nontrivial identity is the last one, which follows from
\begin{align*}
w_2 (-t, \Omega)
&= \frac{1}{\left\| L_{\sqrt{a}} u (-t, \Omega) - L_{\sqrt{a}} u(-(t-8\pi/3), \Omega) \right\|} \\
&= \frac{1}{\left\| R_y L_{\sqrt{a}} u(t, \Omega) - R_y L_{\sqrt{a}} u(t-4\pi/3, \Omega) \right\|}
= w_1 (t, \Omega).
\end{align*}
Given a function $h \in C^\infty(\mathbb{R}/2\pi\mathbb{Z} \times [0, 1], \mathbb{R})$, we define
\begin{equation}
[Rh](t, \Omega) \bydef h(-t, \Omega).
\end{equation}
With this notation, the above identity reads $w_2 = Rw_1$.
To avoid carrying out an additional subscript, we let $w = w_1$, so that $w_2 = Rw$.

Introducing an unfolding parameter $\alpha=\alpha(\Omega)$, the system \eqref{eq:motion_blowup} can be re-written as the first-order polynomial system
\begin{equation}\label{eq:motion_blowup_DDE}
\begin{aligned}
\partial_t v &= - \beta e_1 + \Omega^2 \bar{I} u + 2 \Omega \bar{J} v - w^3 (u - S  u) - R w^3 (u - S^2 u), \\
\partial_t u &= v, \\
\partial_t w &= - \alpha - w^3 \big\langle u - S  u, L_a (v - S  v) \big\rangle,
\end{aligned}
\end{equation}
which we supplement with the conditions
\begin{equation}\label{eq:motion_blowup_DDE_conditions}
u_3 (0,\Omega) = 1, \quad
\frac{1}{2\pi} \int_0^{2\pi} u_1 (t,\Omega) \, dt = 0, \quad
\Big[w^2 \big\langle u - Su, L_a (u - Su) \big\rangle \Big](0,\Omega) = 1, \quad
\text{for all } \Omega \in [0, 1],
\end{equation}
and where
\[
w^3 (u - Su) =
\begin{pmatrix}
w^3 (u_1 - Su_1)\\
w^3 (u_2 - Su_2)\\
w^3 (u_3 - Su_3)
\end{pmatrix}.
\]

The following lemma shows that periodic solutions of \eqref{eq:motion_blowup_DDE} together with \eqref{eq:motion_blowup_DDE_conditions} are equivalent to periodic solutions of \eqref{eq:motion_blowup}.

\begin{lemma}\label{lem:alpha}
If $(u, v, w)$ is a periodic solution of \eqref{eq:motion_blowup_DDE} subjected to the conditions \eqref{eq:motion_blowup_DDE_conditions}, then $\alpha(\Omega) = 0$ for all $\Omega \in [0,1]$.
\end{lemma}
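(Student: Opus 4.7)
The plan is to mirror the proof of Lemma \ref{lem:beta}: integrate a suitable scalar identity over a period, use periodicity to kill the boundary terms, and read off $\alpha=0$ from a non-degeneracy statement. Unlike in Lemma \ref{lem:beta}, where $U_x$ itself carries the information, here the relevant conserved-up-to-$\alpha$ quantity lives on the $w$-component of the polynomial embedding.

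The key observation is that the third equation of \eqref{eq:motion_blowup_DDE} is almost an exact derivative. Set $g \bydef \langle u-Su, L_a(u-Su)\rangle$; differentiating with $v=\partial_t u$ gives $\partial_t g = 2\langle u-Su, L_a(v-Sv)\rangle$, so the $w$-equation reads $\partial_t w = -\alpha - \tfrac{1}{2} w^3 \partial_t g$. Multiplying by $-2w^{-3}$ and recognizing $-2 w^{-3} \partial_t w = \partial_t(w^{-2})$ rearranges this into
\[
\partial_t\bigl( w^{-2} - g \bigr) = 2\alpha \, w^{-3}.
\]
Since $(u,v,w)$ is $2\pi$-periodic in $t$, so is $w^{-2}-g$, and the integral of the left-hand side over $[0,2\pi]$ vanishes. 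This yields $\alpha(\Omega) \int_0^{2\pi} w(t,\Omega)^{-3}\,dt = 0$; if $w$ is nowhere vanishing on the orbit, then $w^{-3}$ has constant sign, the integral is nonzero, and we conclude $\alpha(\Omega)=0$.

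The main obstacle is justifying that $w$ stays away from zero. The third condition in \eqref{eq:motion_blowup_DDE_conditions}, namely $[w^2 g](0,\Omega) = 1$, pins down $w(0,\Omega)^2 = 1/g(0,\Omega) > 0$, securing positivity at $t=0$. In the a posteriori framework of Section \ref{sec:proof}, a genuine solution $(u,v,w)$ lies in a small analytic tube around a numerical candidate on which $w$ is bounded away from zero uniformly in $(t,\Omega)$; positivity then propagates to the true solution and makes $\int_0^{2\pi} w^{-3}\,dt$ strictly nonzero. At the formal level of the present section it is enough to postulate $w>0$ on the solution space, which is consistent with its geometric origin $w = \|L_{\sqrt{a}} u - S L_{\sqrt{a}} u\|^{-1}$.

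As a byproduct, once $\alpha=0$ is established, the identity $\partial_t(w^{-2}-g) = 0$ combined with the initial condition $(w^2 g)(0,\Omega) = 1$ forces $w^{-2} \equiv g$ for all $t$. This recovers the intended geometric meaning of $w$ and confirms that a periodic solution of the polynomial embedding \eqref{eq:motion_blowup_DDE}--\eqref{eq:motion_blowup_DDE_conditions} is genuinely a periodic solution of \eqref{eq:motion_blowup}--\eqref{eq:motion_blowup_condition}, closing the loop between the two formulations.
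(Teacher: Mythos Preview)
Your integration argument --- rewriting the $w$-equation as $\partial_t(w^{-2}-g)=2\alpha\,w^{-3}$ and reading off $\alpha=0$ from periodicity and sign-definiteness of $w^{-3}$ --- is exactly the mechanism the paper uses (they phrase it as solving for $w$ explicitly, but it is the same identity). So the core of your proof matches.

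The gap is in the non-vanishing of $w$. You acknowledge this as ``the main obstacle'' but then punt: invoking the a posteriori framework of Section~\ref{sec:proof} is circular at this point in the paper (the lemma is used to justify that framework, not the other way around), and ``postulating $w>0$'' or appealing to its geometric origin is not a proof either, since the whole point of the polynomial embedding is that $w$ is now an independent unknown decoupled from that origin. The paper closes this gap with a short intrinsic ODE argument that you should supply instead: suppose $w(t_0,\Omega)=0$. If $\alpha=0$, the $w$-equation reads $\partial_t w = -w^3(\cdots)$, so $w\equiv 0$ by uniqueness of the initial value problem, contradicting $[w^2 g](0,\Omega)=1$. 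If $\alpha\neq 0$, then at any zero of $w$ one has $\partial_t w = -\alpha$, a nonzero constant of fixed sign; hence $w$ crosses zero transversally and always in the same direction, so it can cross at most once and therefore cannot be periodic. Either way $w$ is nowhere zero, and your integral argument then goes through cleanly.
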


\begin{proof}
Let us first prove that $w$ can never vanish.
Indeed, suppose that $w(t_0, \Omega) = 0$ for some $t_0$.
If $\alpha = 0$, then $w = 0$ by uniqueness of the initial value problem, which violates $\Big[w^2 \big\langle u - Su, L_a (u-Su) \big\rangle\Big](0,\Omega) = 1$.
On the other hand, if $\alpha \ne 0$, then $w$ changes sign at $t_0$.
Since $\alpha$ is a strictly positive or negative constant, $w$ can never change sign again which contradicts the fact that $w$ is periodic.

Therefore $w \neq 0$, from which we deduce that
\[
w = \left( \big\langle u - Su, L_a (u-Su) \big\rangle + C + 2 \alpha \int_0^t w(t')^{-3}\,dt' \right)^{-1/2},
\]
where $C$ is a constant of integration.
Since $w$ is either strictly positive or negative, the periodicity of $w$ requires $\alpha = 0$.
\end{proof}

\subsection{Zero-finding problem}

We now write \eqref{eq:motion_blowup_DDE} together with \eqref{eq:motion_blowup_DDE_conditions} as the zero of a mapping $F$.
Specifically, we consider the mapping given by
\begin{equation}\label{eq:zero-finding-problem}
F(a,\beta,\alpha,u,v,w) \bydef
\begin{pmatrix}
\eta(u) \\ \gamma(a,u,w) \\ g(\beta,u,v,w) \\ f(u,v) \\ h(a,\alpha,u,v,w)
\end{pmatrix},
\end{equation}
where
\begin{align}
\eta(u) &\bydef \Big(u_3 (0, \Omega) - 1, \frac{1}{2\pi} \int_0^{2\pi} u_1(t, \Omega) \, dt \Big),
\\
\gamma(a, u, w) &\bydef \Big[w^2 \big\langle u - S u, L_a (u - S u) \big\rangle\Big](0, \Omega) - 1,
\\
g(\beta, u, v, w) &\bydef \partial_t v  + \beta e_1 - \Omega^2 \bar{I} u - 2 \Omega \bar{J} v + w^3 (u - S  u) + R w^3 (u - S^2  u),
\\
f(u, v) &\bydef \partial_t u - v,
\\
h(a, \alpha, u, v, w) &\bydef \partial_t w + \alpha + w^3 \big\langle u - S  u, L_a (v - S  v) \big\rangle.
\end{align}

The exact domain and image of $F$ is detailed in Section \ref{sec:proof}.
For the time being, we define the spaces
\begin{equation}\label{eq:V_space}
\mathcal{V} \bydef \Big\{v \in Y \, : \, v(-t,\Omega) = -R_y v(t,\Omega) \text{ and } v(t+\pi, \Omega) = R_z v(t,\Omega)\Big\},
\end{equation}
and
\begin{equation}\label{eq:W_space}
\mathcal{W} \bydef \Big\{w  \in C^\infty(\mathbb{R}/2\pi\mathbb{Z} \times [0,1], \mathbb{R}) \, : \, w(t+\pi, \Omega) = w(t,\Omega)\Big\}.
\end{equation}

\begin{lemma}\label{lem:well_def}
The map $F:C^\infty ([0, 1], \R)^3\times\mathcal{U}\times\mathcal{V}\times\mathcal{W} \to C^\infty ([0, 1], \R)^3\times\mathcal{U}\times\mathcal{V}\times\mathcal{W}$ is well-defined.
\end{lemma}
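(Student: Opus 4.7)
The plan is to verify component by component that each entry of $F(a, \beta, \alpha, u, v, w)$ lies in the claimed factor of $\mathbb{R}^3 \times \mathcal{U} \times \mathcal{V} \times \mathcal{W}$. The two scalar components $\eta(u) \in \mathbb{R}^2$ and $\gamma(a, u, w) \in \mathbb{R}$ fit together in the $\mathbb{R}^3$ factor, and smoothness in $(t, \Omega)$ is preserved by every operation appearing in $F$: differentiation, pointwise arithmetic, the shifts $S^j$, the reflection $R$, evaluation at $t = 0$, and the integral defining $\eta$. All that remains is to check the discrete time-symmetries defining $\mathcal{U}$, $\mathcal{V}$, and $\mathcal{W}$ for the three vector-valued outputs $g$, $f$, and $h$.

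First I would record a handful of elementary commutation facts. The matrices $\bar{I}$ and $L_a$ are diagonal and commute with both $R_y$ and $R_z$; a direct $3 \times 3$ computation shows that $\bar{J}$ anticommutes with $R_y$ and commutes with $R_z$; the vector $e_1$ is fixed by $R_y$ and $R_z$; and $R_z$ is orthogonal. Differentiation in $t$ interchanges the time-reversal symmetries of $\mathcal{U}$ and $\mathcal{V}$ --- differentiating $u(-t, \Omega) = R_y u(t, \Omega)$ gives $\partial_t u(-t, \Omega) = -R_y \partial_t u(t, \Omega)$ --- while preserving the $\pi$-shift symmetry. These facts immediately dispose of all the linear contributions: $\partial_t u - v \in \mathcal{V}$ (so $f \in \mathcal{V}$), and the terms $\partial_t v$, $\beta e_1$, $-\Omega^2 \bar{I} u$, and $-2\Omega \bar{J} v$ all lie in $\mathcal{U}$.

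Next I would analyze the nonlinear contributions, which rest on the key identities
\[
S u(-t, \Omega) = R_y S^2 u(t, \Omega), \qquad S^2 u(-t, \Omega) = R_y S u(t, \Omega),
\]
obtained from $2\pi$-periodicity (so $S^{-1} = S^2$) combined with the $\mathcal{U}$-symmetry of $u$. Thus time reversal swaps $(u - Su) \leftrightarrow (u - S^2 u)$ up to left multiplication by $R_y$; similarly $Rw(-t) = w(t)$ so $w$ and $Rw$ are exchanged under $t \to -t$, while both respect the $\pi$-shift symmetry since $S$ commutes with the $\pi$-shift. Combining these, the two summands $w^3(u - Su)$ and $Rw^3(u - S^2 u)$ in $g$ swap (up to $R_y$) under time reversal, so their sum transforms as $R_y \times(\text{sum})$, yielding the required $\mathcal{U}$-symmetry; the $\pi$-shift symmetry is immediate. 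Hence $g \in \mathcal{U}$. For $h$, only the $\pi$-shift symmetry needs verification, and it follows from the $\pi$-shift symmetries of $u, v, w$, the commutation of $S$ with the $\pi$-shift, the commutation of $L_a$ with $R_z$, and the orthogonality of $R_z$ which preserves the inner product in the cubic nonlinearity.

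The main obstacle --- and precisely the reason the expressions in $g$ and $h$ take exactly the symmetrized form they do --- is that the individual terms $w^3(u - Su)$ and $w^3 \langle u - Su, L_a(v - Sv)\rangle$ do \emph{not} carry the required time-reversal symmetry on their own; the companion terms involving $S^2$ and $R$ are prescribed exactly so that their sum does. Once the swap identities for $S$ and $R$ are in hand, every required verification reduces to a short algebraic manipulation, and the argument is essentially mechanical bookkeeping.
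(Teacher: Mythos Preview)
Your proposal is correct and follows essentially the same approach as the paper. The paper's proof singles out only the one nontrivial verification—the time-reversal symmetry of the nonlinear term $P(u,w) = w^3(u-Su) + Rw^3(u-S^2u)$ in $g$—and checks it via the same swap mechanism you describe, leaving the linear terms, the $\pi$-shift symmetries, and the membership $h \in \mathcal{W}$ as implicit; your write-up simply makes all of those routine checks explicit.
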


\begin{proof}
We need to verify that $F(a,\beta,\alpha,u,v,w) \in C^\infty ([0, 1], \R)^3 \times \mathcal{U} \times \mathcal{V} \times \mathcal{W}$ if $(a,\beta,\alpha,u,v,w) \in C^\infty ([0, 1], \R)^3 \times \mathcal{U} \times \mathcal{V} \times \mathcal{W}$.
The only term that requires some computation is the nonlinear term
\[
P(u,w) \bydef w^3 ( u - S u ) + R w^3 ( u - S^2 u ).
\]
The fact that $P(u,w) \in \mathcal{U}$ if $(u, w) \in \mathcal{U} \times \mathcal{W}$ follows from
\begin{align*}
[P(u,w)](-t, \Omega)
&= w^3 (-t, \Omega) \big( u(-t, \Omega) - u(-t+4\pi/3, \Omega) \big) + w^3(t, \Omega) \big( u(-t, \Omega) - u(-t-4\pi/3, \Omega) \big) \\
&= w^3 (-t, \Omega) \big( R_y u(t, \Omega) - R_y u(t-4\pi/3, \Omega) \big) + w^3(t, \Omega) \big( R_y u(t, \Omega) - R_y u(t+4\pi/3, \Omega) \big) \\
&= R_y [P(u,w)](t, \Omega). \qedhere
\end{align*}
\end{proof}


\section{Existence of a branch of solutions of the functional equation}
\label{sec:proof}

In this section, we prove the existence of a one-parameter family of zeros of $F$, defined formally in \eqref{eq:zero-finding-problem}, parameterized by $\Omega \in [0, 1]$.
As a matter of fact, we consider the complexification of the spaces $\mathcal{U}$, $\mathcal{V}$ and $\mathcal{W}$, respectively denoted by $\mathcal{U}^\mathbb{C}$, $\mathcal{V}^\mathbb{C}$ and $\mathcal{W}^\mathbb{C}$.
The mapping $F : C^\infty ([0, 1], \C)^3 \times \mathcal{U}^\mathbb{C} \times \mathcal{V}^\mathbb{C} \times \mathcal{W}^\mathbb{C} \to C^\infty ([0, 1], \C)^3 \times \mathcal{U}^\mathbb{C} \times \mathcal{V}^\mathbb{C} \times \mathcal{W}^\mathbb{C}$ is still well-defined, as in Lemma \ref{lem:well_def}.
In Section~\ref{sec:aposteriori}, we conclude the proof of Marchal's conjecture by verifying that this zero is indeed in $C^\infty ([0, 1], \R)^3 \times \mathcal{U} \times \mathcal{V} \times \mathcal{W}$ and that it represents the branch of the $P_{12}$ family, joining the Lagrange triangle to the figure eight.

\subsection{Choice of norms}

Let $\nu \ge 1$.
Consider the Banach space
\begin{equation}
\mathcal{U}_\nu \bydef
\Big\{
u \in \mathcal{U}^\mathbb{C} \, : \, \| u \|_{\mathcal{U}_\nu } \bydef  \sum_{j=1}^3 \| u_j \|_\nu < \infty
\Big\}
\end{equation}
where, given $\phi \in C^\infty (\R/2\pi\Z \times [0, 1], \C)$,
\begin{equation} \label{eq:nu_mu_norm}
\| \phi \|_\nu \bydef \frac{1}{\pi^2}  \sum_{k,n \in \Z} 
 \left| \int_0^{2\pi} \hspace{-.2cm} \int_0^1 \hspace{-.1cm} \frac{\phi(t, \Omega) e^{ikt} \mathcal{T}_{|n|} (2\Omega - 1)}{\sqrt{1 - (2\Omega - 1)^2}} \, d\Omega dt \right| \nu^{|k|}.
\end{equation}
Here, $\mathcal{T}_n : [-1, 1] \to [-1, 1]$ represents the $n$-th Chebyshev polynomial of the first kind, given by the recurrence relation
\begin{equation}
\mathcal{T}_0(s) = 1, \qquad \mathcal{T}_1(s) = s, \qquad \mathcal{T}_n(s) = 2 s \mathcal{T}_{n-1} (s) - \mathcal{T}_{n-2}(s), \quad n \ge 2.
\end{equation}
Importantly, these polynomials satisfy the identity $\mathcal{T}_n (\cos \theta) = \cos (n \theta)$, meaning that infinite series of Chebyshev polynomials amounts to cosine series.
This highlights the significance of employing them as a basis for series expansions with respect to the frequency $\Omega$, since the convergence property of analytic Fourier series holds true for functions defined on the entire range of parameter values $[0, 1]$.
This fact contrasts with Taylor expansions, where despite the function's analyticity, the presence of poles frequently necessitates partitioning the function's domain into numerous subintervals.

Consider also
\begin{align}
\mathcal{V}_\nu &\bydef
\Big\{
v \in \mathcal{V}^\mathbb{C} \, : \, \| v \|_{\mathcal{V}_\nu} \bydef \sum_{j=1}^3 \| v_j \|_\nu < \infty
\Big\}, \\
\mathcal{W}_\nu &\bydef
\Big\{
w \in \mathcal{W}^\mathbb{C} \, : \, \| w\|_\nu < \infty
\Big\}, \\
X &\bydef
\Big\{
\psi : C^\infty ([0, 1], \C) \, : \, \| \psi \|_X < \infty
\Big\},
\end{align}
where, given $\psi \in C^\infty ([0, 1], \C)$,
\begin{equation}
\| \psi \|_X  \bydef \frac{2}{\pi} \sum_{n \in \Z} \left|\int_0^1 \frac{\psi(\Omega) \mathcal{T}_{|n|} (2\Omega - 1)}{\sqrt{1 - (2\Omega - 1)^2}} \, d\Omega \right|.
\end{equation}
It will be convenient for the forthcoming norm estimates to observe that the spaces $\mathcal{U}_\nu, \mathcal{V}_\nu, \mathcal{W}_\nu, X$ form unital Banach algebras with respect to the product of functions.
Let us detail the demonstration for $X$, namely let us show, for any $\psi_1, \psi_2 \in X$, that $\| \psi_1 \psi_2 \|_X \le \| \psi_1 \|_X \| \psi_2 \|_X$:
\begin{align*}
\| \psi_1 \psi_2 \|_X
&= \frac{2}{\pi} \sum_{n \in \Z} \left|\int_0^1 \frac{\psi_1(\Omega)\psi_2(\Omega) \mathcal{T}_{|n|} (2\Omega - 1)}{\sqrt{1 - (2\Omega - 1)^2}} \, d\Omega \right| \\
&= \frac{1}{\pi} \sum_{n \in \Z} \left|\int_0^{\pi} \psi_1\Big(\frac{1+\cos \theta}{2}\Big) \psi_2\Big(\frac{1+\cos \theta}{2}\Big) \cos (n \theta) \, d\theta \right| \\
&= \frac{1}{\pi^2} \sum_{n \in \Z} \left| \sum_{n' \in \Z} \left(\int_0^{\pi} \psi_1\Big(\frac{1+\cos \theta}{2}\Big) \cos ((n-n') \theta) \, d\theta\right) \left(\int_0^{\pi} \psi_2\Big(\frac{1+\cos \theta}{2}\Big) \cos (n' \theta) \, d\theta\right) \right| \\
&\le \left(\frac{1}{\pi} \sum_{n \in \Z} \left|\int_0^{\pi} \psi_1(\cos \theta) \cos (n \theta) \, d\theta \right| \right) \left( \frac{1}{\pi} \sum_{n \in \Z} \left|\int_0^{\pi} \psi_2(\cos \theta) \cos (n \theta) \, d\theta \right| \right)\\
&= \| \psi_1 \|_X \| \psi_2 \|_X.
\end{align*}

Finally, define
\begin{equation}\label{eq:banach_space}
\cX_\nu \bydef X \times X \times X \times \mathcal{U}_\nu \times \mathcal{V}_\nu \times \mathcal{W}_\nu,
\end{equation}
endowed with the norm
\begin{equation}
\| x \|_{\cX_\nu} \bydef \| a \|_X + \| \beta \|_X + \| \alpha \|_X + \| u \|_{\mathcal{U}_\nu} + \| v \|_{\mathcal{V}_\nu} + \| w \|_\nu,
\qquad \text{for all } x = (a, \beta, \alpha, u, v, w) \in \cX_\nu.
\end{equation}

Denoting by $\mathcal{D}(F)$ the domain of $F$, we obtain the following.

\begin{proposition}
\[
F : \mathcal{D}(F) \subset \cX_\nu \to \cX_\nu.
\]
\end{proposition}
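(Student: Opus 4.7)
The plan is to verify, componentwise, that each output coordinate of $F$ lies in the corresponding Banach space of $\cX_\nu$, combining (i) the symmetry preservation (already established in Lemma~\ref{lem:well_def}) and (ii) norm finiteness obtained via the Banach algebra structure of $\mathcal{U}_\nu,\mathcal{V}_\nu,\mathcal{W}_\nu,X$ noted in the previous subsection. The domain $\mathcal{D}(F)$ will be taken to consist of those $(a,\beta,\alpha,u,v,w)\in\cX_\nu$ for which the time derivatives $\partial_t u,\partial_t v,\partial_t w$ also lie in $\mathcal{U}_\nu,\mathcal{V}_\nu,\mathcal{W}_\nu$ respectively; this is natural because an extra power of $|k|$ appears on the Fourier side, so the domain is automatically a dense subspace. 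The Chebyshev variable $\Omega$ enters only through pointwise operations and does not interact with the $t$-derivatives, so there is no extra regularity condition in $\Omega$.

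For the two finite-dimensional blocks $\eta$ and $\gamma$ the task is to show that point-evaluation at $t=0$ and the $t$-average define bounded linear maps $\mathcal{U}_\nu\to X$ (and similarly from $\mathcal{W}_\nu$ and products thereof into $X$). Writing an element of $\mathcal{U}_\nu$ as a Fourier-Chebyshev series, the evaluation $u_3(0,\Omega)$ equals the sum over $k$ of the $k$-th Fourier mode, which is a function of $\Omega$ whose Chebyshev coefficients are bounded in $\ell^1$ by the $\|\cdot\|_\nu$-norm (using $\nu\ge 1$); similarly the $t$-average extracts only the $k=0$ mode. For $\gamma$, one first builds $w^2\langle u-Su,L_a(u-Su)\rangle\in\mathcal{W}_\nu$ using the Banach algebra property (noting that the shift $S$ is a norm isometry since $|e^{i k\cdot 4\pi/3}|=1$, and that $a\in X$ multiplies inside the inner product in the scalar direction), then evaluates at $t=0$.

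For the function-valued blocks $g$, $f$ and $h$ the three ingredients are: closure of $\mathcal{U}_\nu,\mathcal{V}_\nu,\mathcal{W}_\nu$ under products (Banach algebras), boundedness of the shift operators $S$ and $R$ (isometries, directly from the coefficient-side definitions), and the fact that $\partial_t$ sends the domain into the target spaces by construction. The terms $\Omega^2 \bar I u$ and $2\Omega\bar J v$ are handled by noting $\Omega\in X$ and applying the algebra property; the cubic nonlinearity $w^3(u-Su)$ is a polynomial in the components and thus stays in $\mathcal{U}_\nu^\mathbb{C}$-valued terms after the appropriate symmetrization already verified in Lemma~\ref{lem:well_def}. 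For $f$ and $h$ the analogous checks are routine. The symmetry requirements in $\mathcal{U},\mathcal{V},\mathcal{W}$ follow exactly as in Lemma~\ref{lem:well_def}, together with the direct verification that $\partial_t$, $S$, $R$ and constant-vector multiplication $e_1,\bar I,\bar J,L_a$ respect the relevant reflection-shift symmetries.

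The main (and really only) obstacle is bookkeeping: one must carefully track that \emph{every} nonlinear term respects each of the two $\mathbb{Z}_2$ symmetries defining $\mathcal{U},\mathcal{V},\mathcal{W}$, and in particular that the asymmetric-looking combination $w^3(u-Su)+Rw^3(u-S^2 u)$ in $g$ and the term $w^3\langle u-Su,L_a(v-Sv)\rangle$ in $h$ transform correctly under $t\mapsto -t$ and $t\mapsto t+\pi$, using the identity $Rw=w_2$ established earlier. Once these symmetry identities are in hand, the proposition reduces to the stated Banach algebra and shift-boundedness facts, and the proof can be dispatched in a few lines.
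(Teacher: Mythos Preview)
Your proposal is correct and follows essentially the same approach as the paper: invoke Lemma~\ref{lem:well_def} (complexified) for the symmetry preservation, and take $\mathcal{D}(F)$ to be the set of $x=(a,\beta,\alpha,u,v,w)\in\cX_\nu$ with $\|\partial_t u\|_{\mathcal{U}_\nu},\|\partial_t v\|_{\mathcal{V}_\nu},\|\partial_t w\|_\nu<\infty$. The paper's proof is in fact just these two sentences; your additional discussion of the Banach algebra structure, boundedness of $S$ and $R$, and the evaluation/average maps $\mathcal{U}_\nu\to X$ spells out details the paper leaves implicit, but is not a different route.
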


\begin{proof}
This follows from Lemma \ref{lem:well_def} (adapted to the complexified space) and
\[
\mathcal{D}(F) =
\Big\{
x = (a, \beta, \alpha, u, v, w) \in \cX_\nu \, : \,
\|\partial_t u\|_{\mathcal{U}_\nu} < \infty, \|\partial_t v\|_{\mathcal{V}_\nu} < \infty, \|\partial_t w\|_\nu < \infty
\Big\}. \qedhere
\]
\end{proof}

\subsection{Verification of the contraction hypotheses}

We now establish the contraction of a Newton-type operator around a numerical approximation of the branch of solutions.
Without deviating from the focus of this manuscript, it is important to highlight the efficacy of our strategy.
Most existing rigorous continuation techniques employ piecewise-linear -- or even piecewise-constant -- approximations of the branch, significantly limiting the step-size between parameter values and incurring substantial computational cost.
In contrast, our technique, capable of obtaining the entire branch of solutions as a single fixed-point, is based on the recent works \cite{Breden,OlivierBreden}.
In essence, the argument leverages the algebraic structure of $F$, with respect to the parameter $\Omega$, to verify the contraction mapping theorem around a high-order approximation of the branch.

The $k$-th Fourier coefficient of a function $\phi \in C^\infty (\R/2\pi\Z \times [0, 1], \mathbb{C})$ is denoted by a subscript $k$ as follows
\[
\phi_k (\Omega) = \frac{1}{2\pi} \int_0^{2\pi} \phi(t, \Omega) e^{ikt} \, dt,
\qquad \Omega \in [0, 1], \quad k \in \Z,
\]
so that $\phi_k \in X$ for all $k \in \Z$.
The Fourier-Chebyshev coefficients are denoted by the double subscript $n,k$ as follows
\[
\phi_{n,k} = \frac{1}{\pi^2} \int_0^{2\pi} \int_0^1 \frac{\phi(t, \Omega) e^{ikt} \mathcal{T}_n (2\Omega - 1)}{\sqrt{1 - (2\Omega - 1)^2}} \, d\Omega dt,
\qquad n \in \N, \quad k \in \Z.
\]
Lastly, using a single subscript $n$, we denote the $n$-th Chebyshev coefficient of a function $\psi \in C^\infty ([0, 1], \mathbb{C})$, namely
\[
\psi_n = \frac{2}{\pi} \int_0^1 \frac{\psi(\Omega) \mathcal{T}_n (2\Omega - 1)}{\sqrt{1 - (2\Omega - 1)^2}} \, d\Omega,
\qquad n \in \N.
\]

Our goal is to prove that a fixed-point operator $T$ (yet to be constructed) is contracting around a finite Fourier-Chebyshev series approximating the $P_{12}$ family.
This approximation lives in a finite dimensional subspace of $\cX_\nu$, which we now detail.
Define the \emph{truncation operators} $\Pi_K, \Pi_{N, K} : C^\infty (\R/2\pi\Z \times [0, 1], \mathbb{C}) \to C^\infty (\R/2\pi\Z \times [0, 1], \mathbb{C})$ by
\begin{equation}
(\Pi_K u)_k \bydef
\begin{cases}
u_k, & |k| \le K, \\
0, & |k| > K,
\end{cases} \qquad
(\Pi_{N,K} u)_{n,k} \bydef
\begin{cases}
(\hat \Pi_N u_k)_n, & |k| \le K, \\
0, & |k| > K,
\end{cases}
\end{equation}
with $\hat \Pi_N : X \to X$ given by
\begin{equation}
(\hat \Pi_N u)_n \bydef
\begin{cases}
u_n, & n \le N, \\
0, & n > N.
\end{cases}
\end{equation}
Consider also the \emph{tail operator} $\Pi_{\infty(K)} \bydef I - \Pi_K$.
Both truncation operators $\Pi_K$ and $\Pi_{N, K}$ naturally extend to all $x = (a, \beta, \alpha, u, v, w) \in \cX_\nu$ by acting component-wise
\[
\Pi_K x = (a, \beta, \alpha, \Pi_K u, \Pi_K v, \Pi_K w), \qquad \Pi_{N, K} x = (\hat \Pi_N a, \hat \Pi_N \beta, \hat \Pi_N \alpha, \Pi_{N, K} u, \Pi_{N, K} v, \Pi_{N, K} w).
\]
Fixing $N, K > 0$, one may construct an interpolation of the zeros of $F$ in $\Pi_{N,K} \cX_\nu$, denoted by $\bar{x}$.
For the sake of completeness, we detail the procedure we followed for this proof.
For each $j = 0, \dots, N$, we find approximate unfolding parameters $a, \beta, \alpha \in \mathbb{C}$ and $2\pi$-periodic solution $u, v \in C^\infty (\R/2\pi\Z, \mathbb{C}^3), w \in C^\infty (\R/2\pi\Z, \mathbb{C})$ to \eqref{eq:motion_blowup_DDE} satisfying \eqref{eq:motion_blowup_DDE_conditions} at the Chebyshev nodes
\[
\Omega_j \bydef \frac{1}{2}(1 + \cos(j\pi/N)).
\]
To be concrete, the periodic solution $u(t, \Omega_j), v(t, \Omega_j), w(t, \Omega_j)$ are represented by finite Fourier series, with the appropriate symmetries of the $P_{12}$ family as described in \eqref{eq:U_sym}:
\begin{alignat*}{5}
&u_1(t, \Omega_j) = \sum_{k \in \Z} (u_1)_k (\Omega_j) e^{i k t}, \qquad &&(u_1)_k = &&(u_1)_{-k}, \quad &&(u_1)_{2k+1} &&= 0,
\\
&u_2(t, \Omega_j) = \sum_{k \in \Z} (u_2)_k (\Omega_j) e^{i k t}, \qquad &&(u_2)_k = -&&(u_2)_{-k}, \quad &&(u_2)_{2k} &&= 0,
\\
&u_3(t, \Omega_j) = \sum_{k \in \Z} (u_3)_k (\Omega_j) e^{i k t}, \qquad &&(u_3)_k = &&(u_3)_{-k}, \quad &&(u_3)_{2k+1} &&= 0,
\end{alignat*}
and similarly for $v$ and $w$.
Hence, we solve numerically $N+1$ finite dimensional problems by using Newton's method on $\Pi_{0,K} F|_{\Omega_j} \circ \Pi_{0,K}$, and we denote by $\bar{x}_j \in \Pi_{0,K} \cX_\nu$ such an approximate zero, i.e. $\Pi_{0,K} F|_{\Omega_j} (\bar{x}_j) \approx 0$.
Then, we retrieve numerically the Chebyshev polynomial associated to each component of $\bar{x}_j$ via the \emph{inverse discrete Fourier transform}, thereby yielding $\bar{x} \in \Pi_{N, K} \cX_\nu$.
It is important to note that for Theorem~ \ref{thm:main} to apply, we ensure that the numerical branch of choreographies precisely passes through the Chebyshev nodes, with particular emphasis on the nodes $\Omega_j \in \{0,1\}$. The resulting Fourier-Chebyshev series coefficients are found in \cite{Henot2024}. Incidentally, it is practical to opt for an appropriate number of Chebyshev nodes to take full advantage of the \emph{fast Fourier transform} algorithm.

Given two Banach spaces $\mathfrak{X}, \mathfrak{Y}$, we denote by $\mathscr{B}(\mathfrak{X}, \mathfrak{Y})$ the set of bounded linear operators from $\mathfrak{X}$ to $\mathfrak{Y}$.
To prove the existence of a zero of $F$, let us construct $A \in \mathscr{B}(\cX_\nu, \cX_\nu)$ so that
\begin{equation}\label{eq:fixed-point-operator}
T(x) \bydef x - A F(x)
\end{equation}
satisfies the Banach Fixed-Point Theorem.
The injectivity of $A$, required to obtain a zero of $F$, will be a by-product of the contraction.

We define the bounded linear operator $A : \cX_\nu \to \cX_\nu$ given by
\[
A = A_\textnormal{finite} \Pi_K + A_\textnormal{tail} \Pi_{\infty(K)},
\]
where $A_\textnormal{finite} = A_\textnormal{finite} (\bar{x}) \in \mathscr{B}(\Pi_K X, \Pi_K X)$ is an approximation of $(\Pi_{K} DF(\bar{x}) \Pi_{K})^{-1}$ and $A_\textnormal{tail} : \Pi_{\infty(K)} \cX_\nu \to \Pi_{\infty(K)} \cX_\nu$ is defined, for all $x = (a, \beta, \alpha, u, v, w) \in \cX_\nu$, by
\begin{align*}
A_\textnormal{tail} x \bydef (0, 0, 0, u', v', w'), \qquad
(u_j')_k &\bydef
\begin{cases}
0, & |k| \le K, \\
(ik)^{-1} (u_j)_k, & |k| > K,
\end{cases} \quad j = 1, 2, 3,\\
(v_j')_k &\bydef
\begin{cases}
0, & |k| \le K, \\
(ik)^{-1} (v_j)_k, & |k| > K,
\end{cases} \quad j = 1, 2, 3,\\
(w')_k &\bydef
\begin{cases}
0, & |k| \le K, \\
(ik)^{-1} w_k, & |k| > K.
\end{cases}
\end{align*}
We also emphasize that computing $A_\textnormal{finite}$ is done without numerically inverting the entire square matrix representing $\Pi_{N,K} DF(\bar{x}) \Pi_{N,K}$.
Since $DF(\bar{x})$ amounts to a multiplication operator with respect to $\Omega$, we compute numerically $A_j \in \mathscr{B}(\Pi_{0, K} \cX_\nu, \Pi_{0, K} \cX_\nu)$ such that $A_j ( \Pi_{0, K} DF|_{\Omega_j}(\bar{x}_j) \Pi_{0, K} ) \approx \Pi_{0, K}$ and, as done for $\bar{x}$, we use the inverse discrete Fourier transform to obtain the Chebyshev polynomial associated to each component of $A_j$.

By construction of $A$, we have that $T \in C^2 (\cX_\nu)$.
For the remainder of this paper, we fix
\begin{equation}
K = 70, \qquad N = 20, \qquad \nu = \frac{11}{10}, \qquad r = 10^{-6}.
\end{equation}
We now give three lemmata leading to the proof that $T$ is a contraction in $\textnormal{cl}(B_r (\bar{x}))$.

\begin{lemma}\label{lem:Y}
\[
\| A F (\bar{x}) \|_{\cX_\nu} \le 10^{-8}.
\]
\end{lemma}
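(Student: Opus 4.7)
The plan is to reduce this estimate to a finite, rigorous computation in interval arithmetic, exploiting the polynomial structure of $F$ and the finite support of $\bar{x}$. Since $\bar{x} \in \Pi_{N,K} \mathcal{X}_\nu$ is a polynomial in both the Fourier and Chebyshev variables, and since every component of $F$ is a polynomial in $(a,\beta,\alpha,u,v,w)$ composed with the linear operators $\partial_t$, $S^j$, $R$, and $L_a$, the residual $F(\bar{x})$ again lies in a known finite-dimensional subspace $\Pi_{N^*, K^*} \mathcal{X}_\nu$. A degree count using the quintic term in $h$ yields $K^* \le 5K$ and, accounting for the Chebyshev dependence on $a$, $N^* \le 6N$; so $F(\bar{x})$ is an explicit finite Fourier-Chebyshev object that can be represented exactly.

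I would first implement the action of $F$ symbolically on the coefficient side: the shifts $S^j$ multiply the $k$-th Fourier mode by $e^{4\pi i j k / 3}$, the time derivative $\partial_t$ multiplies by $ik$, time averages extract the zero-th Fourier mode, evaluations at $t=0$ sum all Fourier modes over $k$, and products correspond to Fourier-Chebyshev convolutions, which are well-behaved thanks to the Banach algebra property of $\mathcal{X}_\nu$. Next, I would decompose $A F(\bar{x}) = A_\textnormal{finite} \Pi_K F(\bar{x}) + A_\textnormal{tail} \Pi_{\infty(K)} F(\bar{x})$: by the definition of $A_\textnormal{tail}$, the second term is obtained simply by dividing each high-frequency coefficient by $ik$, while the first is a finite matrix-vector product involving the precomputed approximate inverse. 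Both contributions live in explicit finite-dimensional subspaces of $\mathcal{X}_\nu$. Finally, one computes the weighted $\ell^1$-norm $\sum |(\,\cdot\,)_{n,k}|\,\nu^{|k|}$ of the result in interval arithmetic and checks that the output is at most $10^{-8}$.

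The only real obstacle is computational rather than analytical: the quintic convolutions substantially enlarge the number of effective Fourier-Chebyshev modes, so one must take advantage of FFT-based convolution algorithms combined with rigorous outward rounding, as provided by the \texttt{RadiiPolynomial} library. Since $\bar{x}$ was produced by applying Newton's method at each of the $N+1$ Chebyshev nodes $\Omega_j$, and $A_\textnormal{finite}$ was assembled from the approximate inverses of $\Pi_{K,0} DF|_{\Omega_j}(\bar{x}_j) \Pi_{K,0}$, we expect the residual to be close to numerical precision at every node, making the target bound $10^{-8}$ comfortably attainable at the chosen truncations $N=20$ and $K=70$.
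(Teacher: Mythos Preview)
Your proposal is correct and follows essentially the same approach as the paper: decompose $A F(\bar{x})$ into the finite part $A_\textnormal{finite}\Pi_K F(\bar{x})$ and the tail part $A_\textnormal{tail}\Pi_{\infty(K)} F(\bar{x})$, exploit the polynomial structure of $F$ so that $F(\bar{x})$ has finite Fourier--Chebyshev support, and evaluate the resulting weighted $\ell^1$-norm rigorously with interval arithmetic. The only minor difference is that the paper bounds the tail contribution via the operator estimate $\|A_\textnormal{tail}\Pi_{\infty(K)}\|\le\frac{1}{K+1}$ applied to $\|\Pi_{\infty(K)} g\|_{\mathcal{V}_\nu}+\|\Pi_{\infty(K)} h\|_\nu$ (noting that $\eta,\gamma,f$ produce no Fourier tail at $\bar{x}$), whereas you propose computing the tail exactly by dividing each high-frequency mode by $ik$; your variant is marginally sharper but otherwise equivalent.
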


\begin{proof}
We have that
\begin{align*}
\| A F(\bar{x}) \|_{\cX_\nu}
&= \| A_\textnormal{finite} \Pi_K F(\bar{x}) + A_\textnormal{tail} \Pi_{\infty(K)} F(\bar{x}) \|_{\cX_\nu} \\
&\le \|A_\textnormal{finite} \Pi_K F(\bar{x})\|_{\cX_\nu} + \frac{1}{K+1} \left( \| \Pi_{\infty(K)} g(\bar{\beta}, \bar{u}, \bar{v}, \bar{w}) \|_{\mathcal{V}_\nu} + \| \Pi_{\infty(K)} h(\bar{a}, \bar{\alpha}, \bar{u}, \bar{v}, \bar{w}) \|_\nu\right).
\end{align*}
Since $g$ and $h$ are polynomials, we can find an upper bound for $\| A F(\bar{x}) \|_{\cX_\nu}$ by a finite number of computations, which are carried out by a computer with interval arithmetic.
\end{proof}

\begin{lemma}\label{lem:Z1}
\[
\| I - A D F (\bar{x}) \|_{\mathscr{B}(\cX_\nu, \cX_\nu)} \le \frac{9}{10}.
\]
\end{lemma}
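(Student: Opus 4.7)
The plan is to exploit the block structure induced by the decomposition $\cX_\nu = \Pi_K \cX_\nu \oplus \Pi_{\infty(K)} \cX_\nu$. Since $A = A_\textnormal{finite} \Pi_K + A_\textnormal{tail} \Pi_{\infty(K)}$ is block diagonal with respect to this splitting, the operator $I - A DF(\bar{x})$ decomposes as four blocks:
\begin{align*}
B_{11} &\bydef \Pi_K - A_\textnormal{finite} \Pi_K DF(\bar{x}) \Pi_K, \\
B_{12} &\bydef - A_\textnormal{finite} \Pi_K DF(\bar{x}) \Pi_{\infty(K)}, \\
B_{21} &\bydef - A_\textnormal{tail} \Pi_{\infty(K)} DF(\bar{x}) \Pi_K, \\
B_{22} &\bydef \Pi_{\infty(K)} - A_\textnormal{tail} \Pi_{\infty(K)} DF(\bar{x}) \Pi_{\infty(K)}.
\end{align*}
Bounding $\|I - A DF(\bar{x})\|_{\mathscr{B}(\cX_\nu,\cX_\nu)}$ by the sum of the four block norms reduces the problem to estimating each $B_{ij}$ separately.

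The three blocks $B_{11}, B_{12}, B_{21}$ all reduce to a finite number of rigorous computations, thanks to the fact that $\bar{x} \in \Pi_{N,K} \cX_\nu$ has finite Fourier-Chebyshev support. Indeed, $DF(\bar{x})$ is the sum of the unbounded operator $\partial_t$ (which preserves Fourier modes and couples nothing off $\bar{x}$) and multiplication operators built from $\bar{x}$; in Fourier space these are convolutions with a sequence supported in $|k| \le K$, so $\Pi_K DF(\bar{x}) \Pi_{\infty(K)}$ only receives input from Fourier modes with $K < |k| \le 2K$, and $\Pi_{\infty(K)} DF(\bar{x}) \Pi_K$ only outputs to $K < |k| \le 2K$. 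In Chebyshev space the coefficient growth is likewise controlled, so each of these blocks can be represented exactly as a finite matrix acting between $\Pi_K$-truncations of the spaces. Their operator norms, computed from the explicit Fourier-Chebyshev coefficients of $\bar{x}$, are then bounded using interval arithmetic via the \texttt{RadiiPolynomial} library.

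The main obstacle, and the only block where the estimate is truly infinite-dimensional, is $B_{22}$. Writing $DF(\bar{x}) = \partial_t + \mathcal{L}(\bar{x})$ where $\mathcal{L}(\bar{x})$ gathers all multiplication terms (arising from $-\Omega^2 \bar{I} u$, $-2\Omega \bar{J} v$, $\beta e_1$, $w^3(u-Su)$, and the inner-product terms in $h$), one exploits that $A_\textnormal{tail}$ inverts $\partial_t$ exactly on $\Pi_{\infty(K)}\cX_\nu$: thus $A_\textnormal{tail} \Pi_{\infty(K)} \partial_t \Pi_{\infty(K)} = \Pi_{\infty(K)}$, and the block collapses to $B_{22} = -A_\textnormal{tail} \Pi_{\infty(K)} \mathcal{L}(\bar{x}) \Pi_{\infty(K)}$. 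The factor $(ik)^{-1}$ in the definition of $A_\textnormal{tail}$ yields the decay factor $1/(K+1)$, and the remaining multiplication operator $\mathcal{L}(\bar{x})$ is bounded in $\|\cdot\|_{\mathscr{B}(\cX_\nu,\cX_\nu)}$ via the Banach algebra property of $\mathcal{U}_\nu, \mathcal{V}_\nu, \mathcal{W}_\nu, X$ established in the previous subsection, applied component-wise to the entries of the Jacobian. The resulting bound is explicit in terms of $\|\bar{u}\|_{\mathcal{U}_\nu}, \|\bar{v}\|_{\mathcal{V}_\nu}, \|\bar{w}\|_\nu$ and the fixed constants $\Omega \in [0,1]$.

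Finally, combining the four block bounds gives a finite list of inequalities involving the Fourier-Chebyshev coefficients of $\bar{x}$, the entries of $A_\textnormal{finite}$, and the constant $1/(K+1) = 1/71$. With the chosen truncation parameters $K = 70, N = 20$ and the weight $\nu = 11/10$, these inequalities are verified by a finite computation in interval arithmetic, yielding the claimed bound $9/10$. The delicate part of the implementation is ensuring that the convolution bounds for $w^3(u-Su)$ and the inner product terms are tight enough that the contribution of $B_{22}$ does not saturate the allowance; this is why the Banach algebra structure (rather than cruder sup-norm estimates) is essential.
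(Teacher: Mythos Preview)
Your overall strategy---block decomposition of $I - A DF(\bar{x})$, explicit finite computation for the ``interior'' blocks, and an analytic $\frac{1}{K+1}\|\mathcal{L}(\bar{x})\|$ bound for the tail block via the Banach algebra---is correct in spirit and close to the paper. There is, however, one concrete error and one structural difference worth noting.

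\medskip

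\textbf{The error.} You assert that the multiplication operators appearing in $DF(\bar{x})$ are ``convolutions with a sequence supported in $|k|\le K$'', and conclude that $\Pi_K DF(\bar{x})\Pi_{\infty(K)}$ only couples input modes $K<|k|\le 2K$ to output modes $|k'|\le K$. This is false: the entries of $DF(\bar{x})$ are \emph{polynomials} in the components of $\bar{x}$ (for instance $\bar{w}^3$, $\bar{w}^3(\bar{u}-S\bar{u})$, $3\bar{w}^2\langle\bar{u}-S\bar{u},L_{\bar{a}}(\bar{v}-S\bar{v})\rangle$), so their Fourier support extends to $|k|\le 4K$ or beyond, not $|k|\le K$. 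Consequently your ``finite matrix'' for $B_{12}$ is larger than stated, and the analogous statement for $B_{21}$ is likewise off. The fix is easy (enlarge the cutoff), but the argument as written does not stand.

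\medskip

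\textbf{The structural difference.} The paper does \emph{not} use your $\Pi_K/\Pi_{\infty(K)}$ splitting on both sides followed by a sum of four block norms. Instead it splits only the \emph{domain} as $\Pi_{2K}\oplus\Pi_{\infty(2K)}$ and uses the exact equality
\[
\|I - A DF(\bar{x})\|_{\mathscr{B}(\cX_\nu,\cX_\nu)}
=\max\Big(\|\Pi_{2K}-ADF(\bar{x})\Pi_{2K}\|,\ \|\Pi_{\infty(2K)}-ADF(\bar{x})\Pi_{\infty(2K)}\|\Big),
\]
which holds because the $\cX_\nu$-norm is of $\ell^1$ type (operator norm is a supremum over columns). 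The first term is then bounded by an explicit finite computation on $\Pi_K-A_\textnormal{finite}\Pi_K DF(\bar{x})\Pi_{2K}$ plus a $\frac{1}{K+1}$-tail contribution. For the second term the paper exploits the identity
\[
\big(\Pi_K\mathcal{M}_\phi\Pi_{\infty(2K)}\big)_{k,k'}=\big(\Pi_{\infty(K)}\phi\big)_{k'-k},
\]
valid for \emph{any} multiplier $\phi$ regardless of its support; this converts the $A_\textnormal{finite}$-coupled tail-to-finite block into something controlled by $\|A_\textnormal{finite}\|\cdot\|\Pi_{\infty(K)}\bar{w}^3\|_\nu$ and similar tail norms, which are exponentially small. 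Your sum-of-four-blocks bound is a legitimate upper bound but strictly coarser, and you give no argument that it actually lands below $9/10$; the paper's $\max$-over-columns together with the $2K$ domain cutoff is what makes the estimate sharp enough.
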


\begin{proof}
We have that
\[
\| I - A DF(\bar{x}) \|_{\mathscr{B}(\cX_\nu, \cX_\nu)}
= \max \Big(\| \Pi_{2K} - A DF(\bar{x})\Pi_{2K} \|_{\mathscr{B}(\cX_\nu, \cX_\nu)}, \| \Pi_{\infty(2K)} - A DF(\bar{x})\Pi_{\infty(2K)} \|_{\mathscr{B}(\cX_\nu, \cX_\nu)} \Big).
\]
On the one hand,
{\small
\begin{align*}
&\| \Pi_{2K} - A DF(\bar{x})\Pi_{2K} \|_{\mathscr{B}(\cX_\nu, \cX_\nu)} \\
&\le \| \Pi_K - A_\textnormal{finite} \Pi_K DF(\bar{x}) \Pi_{2K} \|_{\mathscr{B}(\cX_\nu, \cX_\nu)} \\
&\hspace{0.5cm} + \frac{1}{K+1}
\max \Big(
\|\Pi_{\infty(K)} \partial_a h(\bar{a}, \bar{\alpha}, \bar{u}, \bar{v}, \bar{w})\|_{\mathscr{B}(X, \mathcal{C}_\nu)}, \\
&\hspace{1.5cm} \max_{i=1,2,3} \Big(\sum_{j=1}^3 \| \partial_{u_i} g_j(\bar{\beta}, \bar{u}, \bar{v}, \bar{w}) \|_{\mathscr{B}(\mathcal{C}_\nu, \mathcal{C}_\nu)} + \| \partial_{u_i} h(\bar{a}, \bar{\alpha}, \bar{u}, \bar{v}, \bar{w}) \|_{\mathscr{B}(\mathcal{C}_\nu, \mathcal{C}_\nu)}\Big), \\
&\hspace{1.5cm} \max_{i=1,2,3} \Big(\sum_{j=1}^3 \| \partial_{v_i} g_j(\bar{\beta}, \bar{u}, \bar{v}, \bar{w}) - \delta_{ji} \partial_t \|_{\mathscr{B}(\mathcal{C}_\nu, \mathcal{C}_\nu)} + \| \partial_{v_i} f_j(\bar{u}, \bar{v}) \|_{\mathscr{B}(\mathcal{C}_\nu, \mathcal{C}_\nu)} + \| \partial_{v_i} h(\bar{a}, \bar{\alpha}, \bar{u}, \bar{v}, \bar{w}) \|_{\mathscr{B}(\mathcal{C}_\nu, \mathcal{C}_\nu)}\Big), \\
&\hspace{2.65cm} \sum_{j=1}^3 \| \partial_w g_j(\bar{\beta}, \bar{u}, \bar{v}, \bar{w}) \|_{\mathscr{B}(\mathcal{C}_\nu, \mathcal{C}_\nu)} + \| \partial_w h(\bar{a}, \bar{\alpha}, \bar{u}, \bar{v}, \bar{w}) - \partial_t \|_{\mathscr{B}(\mathcal{C}_\nu, \mathcal{C}_\nu)}
\Big) \\
&\le \| \Pi_K - A_\textnormal{finite} \Pi_K DF(\bar{x}) \Pi_{2K} \|_{\mathscr{B}(\cX_\nu, \cX_\nu)} \\
&\hspace{0.5cm} + \frac{1}{K+1}
\max \Big( 
\|\Pi_{\infty(K)} \bar{w}^3 \big\langle \bar{u}_3 - S \bar{u}_3, \bar{v}_3 - S \bar{v}_3 \big\rangle\|_\nu, \\
&\hspace{3cm} \max_{i=1,2,3} \Big(\|\Omega^2\|_X (\delta_{1i} + \delta_{2i}) + \sqrt{3}\big(2\|\bar{w}^3\|_\nu + \|\bar{w}^3 (L_{\bar{a}} (\bar{v} - S \bar{v}))_i\|_\nu\big)\Big), \\
&\hspace{3cm} 1 + \max_{i=1,2,3} \Big(\|2\Omega\|_X (\delta_{1i} + \delta_{2i}) + \sqrt{3} \|\bar{w}^3 (L_{\bar{a}} (\bar{u} - S \bar{u}))_i\|_\nu\Big), \\
&\hspace{3cm} \|3\bar{w}^2 \big\langle \bar{u} - S \bar{u}, L_{\bar{a}} (\bar{v} - S \bar{v}) \big\rangle\|_\nu + \sum_{i=1}^3 \|3\bar{w}^2 (\bar{u}_i - S \bar{u}_i)\|_\nu + \|3\bar{w}^2 (\bar{u}_i - S^2 \bar{u}_i)\|_\nu
\Big),
\end{align*}
}
where
\begin{equation}
\mathcal{C}_\nu \bydef
\Big\{\phi \in C^\infty(\R/2\pi\Z \times [0, 1], \C) \, : \, \| \phi \|_\nu < \infty\Big\}.
\end{equation}
We also slightly abuse notation and write $\| \psi(\Omega) \|_X$ to denote $\|\psi\|_X$ for $\psi \in X$, e.g. $\|\Omega^2\|_X = \| \psi\|_X$ with $\psi(\Omega) = \Omega^2$.
Moreover, the symbol $\delta_{ji}$ denotes the Kronecker delta, and we use the fact that $\|R\|_{\mathscr{B}(\mathcal{C}_\nu, \mathcal{C}_\nu)} = 1$, $\|I - S\|_{\mathscr{B}(\mathcal{C}_\nu, \mathcal{C}_\nu)} = \|I - S^2\|_{\mathscr{B}(\mathcal{C}_\nu, \mathcal{C}_\nu)} = \sqrt{3}$.

On the other hand, the multiplication operator associated with $\phi \in C^\infty (\R/2\pi\Z \times [0, 1], \mathbb{C})$, and given by $\mathcal{M}_\phi \psi  \bydef \phi \psi$ for all $\psi \in C^\infty (\R/2\pi\Z \times [0, 1], \mathbb{C})$, satisfies
\[
\big(\Pi_K \mathcal{M}_\phi \Pi_{\infty(2K)}\big)_{k,k'} = \big(\Pi_{\infty(K)} \phi\big)_{k' - k}, \qquad k,k' \in \Z.
\]
Hence,
{\footnotesize
\begin{align*}
&\| \Pi_{\infty(2K)} - A DF(\bar{x})\Pi_{\infty(2K)} \|_{\mathscr{B}(\cX_\nu, \cX_\nu)} \\
&\le \|A_\textnormal{finite}\|_{\mathscr{B}(\cX_\nu, \cX_\nu)} \\
&\hspace{0cm} \times \max\Big(
\max_{i=1,2,3} \Big( \|\partial_{u_i} \eta_1(\bar{u}) \Pi_{\infty(2K)}\|_{\mathscr{B}(\mathcal{C}_\nu, X)} + \|\partial_{u_i} \gamma(\bar{a}, \bar{u}, \bar{w}) \Pi_{\infty(2K)}\|_{\mathscr{B}(\mathcal{C}_\nu, X)} + \sum_{j=1}^3 \|\Pi_{\infty(K)}\partial_{u_i} g_j(\bar{\beta}, \bar{u}, \bar{v}, \bar{w})\|_{\mathscr{B}(\mathcal{C}_\nu, \mathcal{C}_\nu)} \\
&\hspace{10.33cm} + \|\Pi_{\infty(K)}\partial_{u_i} h(\bar{a}, \bar{\alpha}, \bar{u}, \bar{v}, \bar{w})\|_{\mathscr{B}(\mathcal{C}_\nu, \mathcal{C}_\nu)} \Big), \\
&\hspace{1.15cm} \max_{i=1,2,3} \|\Pi_{\infty(K)}\partial_{v_i} h(\bar{a}, \bar{\alpha}, \bar{u}, \bar{v}, \bar{w})\|_{\mathscr{B}(\mathcal{C}_\nu, \mathcal{C}_\nu)}, \\
&\hspace{1.33cm} \|\partial_w \gamma(\bar{a}, \bar{u}, \bar{w}) \Pi_{\infty32K)}\|_{\mathscr{B}(\mathcal{C}_\nu, X)} + \sum_{j=1}^3 \|\Pi_{\infty(K)}\partial_w g_j(\bar{\beta}, \bar{u}, \bar{v}, \bar{w})\|_{\mathscr{B}(\mathcal{C}_\nu, \mathcal{C}_\nu)} + \|\Pi_{\infty(K)}\partial_w h(\bar{a}, \bar{\alpha}, \bar{u}, \bar{v}, \bar{w})\|_{\mathscr{B}(\mathcal{C}_\nu, \mathcal{C}_\nu)}
\Big) \\
&\hspace{0cm} + \frac{1}{K+1} \max \Big( 
\max_{i=1,2,3} \Big(\sum_{j=1}^3 \| \partial_{u_i} g_j(\bar{\beta}, \bar{u}, \bar{v}, \bar{w}) \|_{\mathscr{B}(\mathcal{C}_\nu, \mathcal{C}_\nu)} + \| \partial_{u_i} h(\bar{a}, \bar{\alpha}, \bar{u}, \bar{v}, \bar{w}) \|_{\mathscr{B}(\mathcal{C}_\nu, \mathcal{C}_\nu)} \Big), \\
&\hspace{2.07cm} \max_{i=1,2,3} \Big(\sum_{j=1}^3 \| \partial_{v_i} g_j (\bar{\beta}, \bar{u}, \bar{v}, \bar{w}) - \delta_{ji} \partial_t \|_{\mathscr{B}(\mathcal{C}_\nu, \mathcal{C}_\nu)} + \| \partial_{v_i} f_j (\bar{u}, \bar{v}) \|_{\mathscr{B}(\mathcal{C}_\nu, \mathcal{C}_\nu)} + \| \partial_{v_i} h(\bar{a}, \bar{\alpha}, \bar{u}, \bar{v}, \bar{w}) \|_{\mathscr{B}(\mathcal{C}_\nu, \mathcal{C}_\nu)} \Big), \\
&\hspace{3.2cm} \sum_{j=1}^3 \| \partial_w g_j(\bar{\beta}, \bar{u}, \bar{v}, \bar{w}) \|_{\mathscr{B}(\mathcal{C}_\nu, \mathcal{C}_\nu)} + \| \partial_w h(\bar{a}, \bar{\alpha}, \bar{u}, \bar{v}, \bar{w}) - \partial_t \|_{\mathscr{B}(\mathcal{C}_\nu, \mathcal{C}_\nu)}
\Big) \\
& \le \|A_\textnormal{finite}\|_{\mathscr{B}(\cX_\nu, \cX_\nu)} \\
&\hspace{0.5cm} \times \max\Big( 
\sqrt{3} \max_{i=1,2,3} \Big( \big(\delta_{3i} + \|[2 \bar{w}^2 ( L_{\bar{a}} (\bar{u} - S \bar{u}) )_i ](0, \Omega)\|_X\big)\frac{1}{\nu^{2K+1}} + 2\|\Pi_{\infty(K)} \bar{w}^3\|_\nu + \|\Pi_{\infty(K)} \bar{w}^3 (L_{\bar{a}} (\bar{v} - S \bar{v}))_i\|_\nu \Big), \\
&\hspace{1.65cm} \sqrt{3} \max_{i=1,2,3} \|\Pi_{\infty(K)} \bar{w}^3 (L_{\bar{a}} (\bar{u} - S \bar{u}))_i\|_\nu, \\
&\hspace{1.65cm} \|[2 \bar{w} \big\langle \bar{u} - S \bar{u}, L_{\bar{a}} (\bar{u} - S \bar{u}) \big\rangle](0, \Omega)\|_X \frac{1}{\nu^{2K + 1}} \\
&\hspace{2.5cm} + \|\Pi_{\infty(K)} 3 w^2 \big\langle u - S u, L_{\bar{a}} (v - S v) \big\rangle\|_\nu + \sum_{i=1}^3 \|\Pi_{\infty(K)} (3\bar{w}^2 (\bar{u}_i - S \bar{u}_i))\|_\nu + \|\Pi_{\infty(K)} 3 \bar{w}^2 (\bar{u}_i - S^2 \bar{u}_i)\|_\nu \Big) \\
&\hspace{0.5cm} + \frac{1}{K+1} \max\Big( 
\max_{i=1,2,3} \Big(\|\Omega^2\|_X (\delta_{1i} + \delta_{2i}) + \sqrt{3}(2\|\bar{w}^3\|_\nu + \|\bar{w}^3 ( L_{\bar{a}} (\bar{v} - S \bar{v}) )_i\|_\nu)\Big), \\
&\hspace{2.56cm} 1 + \max_{i=1,2,3} \Big(\|2\Omega\|_X (\delta_{1i} + \delta_{2i}) + \sqrt{3} \|\bar{w}^3 ( L_{\bar{a}} (\bar{u} - S \bar{u}) )_i\|_\nu\Big), \\
&\hspace{2.56cm} \|3\bar{w}^2 \big\langle \bar{u} - S \bar{u}, L_{\bar{a}} (\bar{v} - S \bar{v}) \big\rangle\|_\nu + \sum_{i=1}^3 \|3\bar{w}^2 (\bar{u}_i - S \bar{u}_i)\|_\nu + \|3\bar{w}^2 (\bar{u}_i - S^2 \bar{u}_i)\|_\nu
\Big).
\end{align*}
}
Since the above expressions only involve polynomials, as well as the product of finite matrices, we can find an upper bound for $\| I - A DF(\bar{x}) \|_{\mathscr{B}(\cX_\nu, \cX_\nu)}$ by a finite number of computations, which are carried out by a computer with interval arithmetic.
\end{proof}

\begin{lemma}\label{lem:Z2}
\[
\sup_{x \in \textnormal{cl}(B_r (\bar{x}))} \| A D^2 F (x) \|_{\mathscr{B}(\cX_\nu, \mathscr{B}(\cX_\nu, \cX_\nu))} \le 10^4.
\]
\end{lemma}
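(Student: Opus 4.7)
The plan follows the same recipe used in Lemmas \ref{lem:Y} and \ref{lem:Z1}: decompose the operator $A$, enumerate every non-vanishing entry of $D^2 F$, bound each by a polynomial expression in the Fourier--Chebyshev coefficients of $\bar{x}$ and the radius $r$ via Banach algebra estimates, and discharge the resulting finite list of polynomial inequalities with interval arithmetic. The first step is to use $A = A_\textnormal{finite} \Pi_K + A_\textnormal{tail} \Pi_{\infty(K)}$, together with the fact that $A_\textnormal{tail}$ acts as multiplication by $(ik)^{-1}$ on modes $|k| > K$ and so has operator norm at most $1/(K+1)$, to obtain
\[
\| A D^2 F(x) \|_{\mathscr{B}(\cX_\nu, \mathscr{B}(\cX_\nu, \cX_\nu))}
\le \|A_\textnormal{finite}\|\,\| \Pi_K D^2 F(x) \| + \frac{1}{K+1}\, \| \Pi_{\infty(K)} D^2 F(x) \|.
\]
This reduces the task to uniform bounds on each bilinear block of $D^2 F(x)$.

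The next step is to enumerate those blocks. Because $\eta$ and $f$ are linear in their arguments, their Hessians vanish; $\gamma$ is trilinear (of the form $w^2$ times a bilinear form in $u$ that is linear in $a$, evaluated at $t=0$); $g$ is at most cubic in $w$ times linear in $u$ arising from $w^3(u-Su)$; and $h$ is at most cubic in $w$ times a trilinear form in $(a, u, v)$. Consequently every block of $D^2 F(x)$ is a polynomial of total degree at most three in $(a, u, v, w)$, with coefficients built from the shift operators $S, S^2, R$, from $L_a$, from inner products, and (in the $\gamma$ component) from pointwise evaluation at $t=0$. For each such block, the Banach algebra property of $\mathcal{U}_\nu, \mathcal{V}_\nu, \mathcal{W}_\nu, X$ established before Lemma \ref{lem:Y}, together with $\|S\| = \|S^2\| = \|R\| = 1$, $\|I - S\| = \|I - S^2\| = \sqrt{3}$, and the evaluation bound $\|\phi(0,\cdot)\|_X \le \|\phi\|_\nu$ (valid since $\nu \ge 1$), yields a polynomial bound in the norms of its arguments. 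Writing $x = \bar{x} + \delta$ with $\|\delta\|_{\cX_\nu} \le r$ and expanding, the supremum over the ball is obtained by replacing each occurrence of $\|x_i\|$ by $\|\bar{x}_i\| + r$. What results is a polynomial of degree at most three in $r$ whose coefficients are norms of explicit products of the components of $\bar{x}$, all computable in finite time.

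The principal obstacle is not mathematical but organizational, much as in the proof of Lemma \ref{lem:Z1}: once the tensor-product structure of $D^2$, the $a$-dependence, the shift and reflection operators, and the inner products in $\gamma$ and $h$ are fully unfolded, there are on the order of a dozen distinct bilinear blocks, each contributing several summands, each of which must be implemented carefully enough in interval arithmetic that no spurious overestimate destroys the target bound $10^4$. Additionally, the $\gamma$ component of $D^2 F$ maps into $X$ rather than into $\mathcal{C}_\nu$ and must be treated separately using the evaluation bound above. The final certification of the inequality is performed by the Jupyter notebook \cite{Henot2024} with the \texttt{RadiiPolynomial} library, exactly as in the two previous lemmata.
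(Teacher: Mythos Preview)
Your approach is essentially the same as the paper's: factor through $\|A\|$, enumerate the nonzero second-derivative blocks of $F$, bound each via the Banach-algebra inequalities with $\|\bar{x}_i\|+r$ in place of $\|x_i\|$, and discharge the resulting finite list of polynomial inequalities by interval arithmetic. Two small remarks: the paper bounds $\|A\|_{\mathscr{B}(\cX_\nu,\cX_\nu)}$ directly by $\max\big(\|A_\textnormal{finite}\|,\tfrac{1}{K+1}\big)$ rather than your additive split (slightly sharper, since $A$ is block-diagonal in $\Pi_K \oplus \Pi_{\infty(K)}$); and the term $a\,w^3(u_3-Su_3)(v_3-Sv_3)$ in $h$ has total degree six, so some blocks of $D^2F$ have degree four rather than three---a harmless miscount that does not affect the method.
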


\begin{proof}
Note that
\[
\sup_{x \in \textnormal{cl}(B_r (\bar{x}))} \| A D^2 F(x) \|_{\mathscr{B}(X, \mathscr{B}(\cX_\nu, \cX_\nu))}
\le \| A \|_{\mathscr{B}(\cX_\nu, \cX_\nu)} \sup_{x \in \textnormal{cl}(B_r (\bar{x}))} \| D^2 F(x) \|_{\mathscr{B}(\cX_\nu, \mathscr{B}(\cX_\nu, \cX_\nu))},
\]
with $\| A \|_{\mathscr{B}(\cX_\nu, \cX_\nu)} \le \max\left( \| A_\textnormal{finite} \|_{\mathscr{B}(\cX_\nu, \cX_\nu)},  \frac{1}{K+1} \right)$.
Moreover, observe that, for an operator $\mathfrak{T} \in C^2 (\ell^1)$,
\begin{align*}
\| D^2 \mathfrak{T}(x) \|_{\mathscr{B}(\ell^1, \mathscr{B}(\ell^1, \ell^1))}
&= \sup_{h \in \ell^1} \| D^2 \mathfrak{T}(x) h \|_{\mathscr{B}(\ell^1, \ell^1)} \\
&= \sup_{h \in \ell^1} \max_{j \in \N} \sum_{i \in \N} | \partial_j \sum_{l \in \N} \partial_l \mathfrak{T}_i (x) h_l | \\
&\le \max_{j \in \N} \sum_{i \in \N} \sup_{h \in \ell^1} | \partial_j \sum_{l \in \N} \partial_l \mathfrak{T}_i (x) h_l | \\
&= \max_{j \in \N} \sum_{i \in \N} \| \partial_j \nabla \mathfrak{T}_i (x) \|_ {\ell^\infty} \\
&= \max_{j \in \N} \sum_{i \in \N} \max_{l \in \N} | \partial_j \partial_l \mathfrak{T}_i (x) |.
\end{align*}
Since the norm $\| \, \cdot \, \|_{\cX_\nu}$ amounts to a weighted $\ell^1$ space, it is straightforward to obtain an analogous formula for $\| D^2 F(x) \|_{\mathscr{B}(\cX_\nu, \mathscr{B}(\cX_\nu, \cX_\nu))}$ of the form
\begin{align*}
&\sup_{x \in \textnormal{cl}(B_r (\bar{x}))} \| D^2 F(x) \|_{\mathscr{B}(\cX_\nu, \mathscr{B}(\cX_\nu, \cX_\nu))} \\
&\le \sup_{x \in \textnormal{cl}(B_r (\bar{x}))} \max\Bigg( \\
&\max\Big(
\max_{i=1, 2, 3} \| \partial_{u_i} \partial_a \gamma \|_{\mathscr{B}(\mathcal{C}_\nu, \mathscr{B}(X, X))}, \| \partial_w \partial_a \gamma \|_{\mathscr{B}(\mathcal{C}_\nu, \mathscr{B}(X, X))}
\Big) \\
&\quad+ \max\Big(
\max_{i=1,2,3} \| \partial_{u_i} \partial_a h \|_{\mathscr{B}(X, \mathscr{B}(\mathcal{C}_\nu, \mathcal{C}_\nu))},
\max_{i=1,2,3} \| \partial_{v_i} \partial_a h \|_{\mathscr{B}(X, \mathscr{B}(\mathcal{C}_\nu, \mathcal{C}_\nu))},
\| \partial_w \partial_a h \|_{\mathscr{B}(X, \mathscr{B}(\mathcal{C}_\nu, \mathcal{C}_\nu))}
\Big)
, \\
&\max_{i=1,2,3} \Big\{ \max\Big(
\| \partial_a \partial_{u_i} \gamma \|_{\mathscr{B}(X, \mathscr{B}(\mathcal{C}_\nu, X))},
\max_{j=1,2,3}\| \partial_{u_j} \partial_{u_i} \gamma \|_{\mathscr{B}(\mathcal{C}_\nu, \mathscr{B}(\mathcal{C}_\nu, X))},
\| \partial_w \partial_{u_i} \gamma \|_{\mathscr{B}(\mathcal{C}_\nu, \mathscr{B}(\mathcal{C}_\nu, X))}
\Big) \\
&\quad+
\| \partial_w \partial_{u_i} g_i \|_{\mathscr{B}(\mathcal{C}_\nu, \mathscr{B}(\mathcal{C}_\nu, \mathcal{C}_\nu))}
\\
&\quad+ \max\Big(
\| \partial_a \partial_{u_i} h \|_{\mathscr{B}(X, \mathscr{B}(\mathcal{C}_\nu, \mathcal{C}_\nu))},
\max_{j=1,2,3}\| \partial_{v_j} \partial_{u_i} h  \|_{\mathscr{B}(\mathcal{C}_\nu, \mathscr{B}(\mathcal{C}_\nu, \mathcal{C}_\nu))},
\| \partial_w \partial_{u_i} h \|_{\mathscr{B}(\mathcal{C}_\nu, \mathscr{B}(\mathcal{C}_\nu, \mathcal{C}_\nu))}
\Big)
\Big\}, \\
&\max_{i=1,2,3} \Big\{
\| \partial_a \partial_{v_i} h \|_{\mathscr{B}(X, \mathscr{B}(\mathcal{C}_\nu, \mathcal{C}_\nu))},
\max_{j=1,2,3} \| \partial_{u_j} \partial_{v_i} h \|_{\mathscr{B}(\mathcal{C}_\nu, \mathscr{B}(\mathcal{C}_\nu, \mathcal{C}_\nu))},
\| \partial_w \partial_{v_i} h \|_{\mathscr{B}(\mathcal{C}_\nu, \mathscr{B}(\mathcal{C}_\nu, \mathcal{C}_\nu))}
\Big\}, \\
&\max\Big(
\| \partial_a \partial_w \gamma \|_{\mathscr{B}(X, \mathscr{B}(\mathcal{C}_\nu, X))},
\max_{i=1,2,3} \| \partial_{u_i} \partial_w \gamma \|_{\mathscr{B}(\mathcal{C}_\nu, \mathscr{B}(\mathcal{C}_\nu, X))},
\| \partial_w^2 \gamma \|_{\mathscr{B}(\mathcal{C}_\nu, \mathscr{B}(\mathcal{C}_\nu, X))}
\Big) \\
&\quad+ \sum_{i=1}^3 \max\Big(
\max_{j=1,2,3} \| \partial_{u_j} \partial_w g_i \|_{\mathscr{B}(\mathcal{C}_\nu, \mathscr{B}(\mathcal{C}_\nu, \mathcal{C}_\nu))},
\| \partial_w^2 g_i \|_{\mathscr{B}(\mathcal{C}_\nu, \mathscr{B}(\mathcal{C}_\nu, \mathcal{C}_\nu))}
\Big) \\
&\quad+ \max\Big(
\| \partial_a \partial_w h \|_{X, \mathscr{B}(\mathcal{C}_\nu, \mathcal{C}_\nu))},
\max_{i=1,2,3} \| \partial_{u_i} \partial_w h \|_{\mathscr{B}(\mathcal{C}_\nu, \mathscr{B}(\mathcal{C}_\nu, \mathcal{C}_\nu))},
\max_{i=1,2,3} \| \partial_{v_i} \partial_w h \|_{\mathscr{B}(\mathcal{C}_\nu, \mathscr{B}(\mathcal{C}_\nu, \mathcal{C}_\nu))}, \\
&\quad\qquad\qquad\| \partial_w^2 h \|_{\mathscr{B}(\mathcal{C}_\nu, \mathscr{B}(\mathcal{C}_\nu, \mathcal{C}_\nu))}
\Big)
\Bigg) \\
&\le \max\Big(
6 \max\big( \hat{w}^2 \hat{u}_3, \hat{w} \hat{u}_3^2 \big) + 3\max\big(\hat{w}^3 \hat{v}_3, \hat{w}^3 \hat{u}_3, 3 \hat{w}^2 \hat{u}_3 \hat{v}_3\big), \\
&\hspace{1.5cm} \max_{i=1,2,3} \Big\{6 \max\big(\delta_{3i} \hat{w}^2 \hat{u}_i, (1 + \delta_{3i}(\hat{a}-1)) \hat{w}^2, 2 (1 + \delta_{3i}(\hat{a}-1)) \hat{w} \hat{u}_i\big) \\
&\hspace{2.75cm} + 6 \sqrt{3} \hat{w}^2 \\
&\hspace{2.75cm} + 3 \max\big(\delta_{3i} \hat{w}^3 \hat{v}_i, (1 + \delta_{3i}(\hat{a}-1)) \hat{w}^3, 3 (1 + \delta_{3i}(\hat{a}-1)) \hat{w}^2 \hat{v}_i\big)\Big\}, \\
&\hspace{1.5cm} \max_{i=1,2,3} \Big\{ 3 \max\big(\delta_{3i} \hat{w}^3 \hat{u}_i, (1 + \delta_{3i}(\hat{a}-1)) \hat{w}^3, 3(1+\delta_{3i}(\hat{a}-1)) \hat{w}^2 \hat{u}_i\big) \Big\}, \\
&\hspace{1.5cm} 6 \max \big(\hat{w} \hat{u}_3^2, 2 \hat{w} \hat{u}_1, 2\hat{w} \hat{u}_2, 2\hat{w} \hat{a} \hat{u}_3, \hat{u}_1^2 + \hat{u}_2^2 + \hat{a} \hat{u}_3^2\big) \\
&\hspace{2.75cm} + 6 \sqrt{3} \sum_{i=1}^3 \max(\hat{w}^2, 2 \hat{w} \hat{u}_i) \\
&\hspace{2.75cm} + 9 \max\big(\hat{w}^2 \hat{u}_3 \hat{v}_3, \hat{w}^2 \hat{v}_1, \hat{w}^2 \hat{v}_2, \hat{w}^2 \hat{a} \hat{v}_3,
\hat{w}^2 \hat{u}_1, \hat{w}^2 \hat{u}_2, \hat{w}^2 \hat{a} \hat{u}_3,
2\hat{w} (\hat{u}_1 \hat{v}_1 + \hat{u}_2 \hat{v}_2 + \hat{a} \hat{u}_3 \hat{v}_3)\big)
\Bigg),
\end{align*}
where we used the Banach algebra property, and $\hat{a} \bydef \|\bar{a}\|_X + r$, $\hat{u}_i \bydef \|\bar{u}_i\|_\nu + r$, $\hat{v}_i \bydef \|\bar{v}_i\|_\nu + r$, $\hat{w} \bydef \|\bar{w}\|_\nu + r$.

Therefore, we can find an upper bound for $\sup_{x \in \textnormal{cl}(B_r (\bar{x}))} \| A D^2 F (x) \|_{\mathscr{B}(\cX_\nu, \mathscr{B}(\cX_\nu, \cX_\nu))}$ by a finite number of computations, which are carried out by a computer with interval arithmetic.
\end{proof}

Lemma \ref{lem:Y}, \ref{lem:Z1} and \ref{lem:Z2} yield the desired contraction of $T$.

\begin{theorem}\label{thm:contraction}
\[
T : \textnormal{cl}(B_r (\bar{x})) \to \textnormal{cl}(B_r (\bar{x})),
\]
and, for all $x, x' \in \textnormal{cl}(B_r (\bar{x}))$,
\[
\| T (x) - T (x') \|_{\cX_\nu} \le \kappa \| x - x' \|_{\cX_\nu},
\]
where $\kappa =  \frac{91}{100} < 1$.
\end{theorem}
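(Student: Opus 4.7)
The plan is to combine the three preceding lemmas via a standard Newton-Kantorovich / radii polynomial argument. The core observation is that $T(x) = x - AF(x)$ satisfies $DT(x) = I - A\, DF(x)$, so the contraction constant on $\textnormal{cl}(B_r(\bar{x}))$ will be controlled by $\|I - A\, DF(\bar{x})\|$ plus a term of order $r$ coming from the second derivative of $F$, and the self-map property will follow from the residual bound $\|A F(\bar{x})\|_{\cX_\nu}$.

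First, I would establish the Lipschitz bound. For any $y \in \textnormal{cl}(B_r(\bar{x}))$, write
\[
DT(y) = (I - A\, DF(\bar{x})) - A \int_0^1 D^2 F(\bar{x} + t(y - \bar{x}))(y - \bar{x})\,dt,
\]
which is valid since $T \in C^2(\cX_\nu)$ by construction of $A$. Taking operator norms, using Lemma \ref{lem:Z1}, Lemma \ref{lem:Z2}, and $\|y-\bar{x}\|_{\cX_\nu} \le r = 10^{-6}$, gives
\[
\sup_{y \in \textnormal{cl}(B_r(\bar{x}))} \|DT(y)\|_{\mathscr{B}(\cX_\nu,\cX_\nu)} \le \tfrac{9}{10} + 10^4 \cdot 10^{-6} = \tfrac{9}{10} + \tfrac{1}{100} = \tfrac{91}{100} = \kappa.
\]
Then the mean value inequality applied along the segment joining any two points $x, x' \in \textnormal{cl}(B_r(\bar{x}))$ (the ball being convex) yields $\|T(x) - T(x')\|_{\cX_\nu} \le \kappa \|x - x'\|_{\cX_\nu}$.

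Next, I would verify the self-map property. For $x \in \textnormal{cl}(B_r(\bar{x}))$, the triangle inequality together with the Lipschitz bound just established and Lemma \ref{lem:Y} gives
\[
\|T(x) - \bar{x}\|_{\cX_\nu} \le \|T(x) - T(\bar{x})\|_{\cX_\nu} + \|T(\bar{x}) - \bar{x}\|_{\cX_\nu} \le \kappa r + \|A F(\bar{x})\|_{\cX_\nu} \le \tfrac{91}{100} \cdot 10^{-6} + 10^{-8} \le 10^{-6} = r,
\]
so $T$ maps $\textnormal{cl}(B_r(\bar{x}))$ into itself. This concludes the proof.

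There is no genuine obstacle here: all the substantive analytic and computer-assisted work has already been absorbed into the statements of Lemmas \ref{lem:Y}, \ref{lem:Z1}, and \ref{lem:Z2}. The only thing to check is the arithmetic consistency of the numerical constants $r = 10^{-6}$, the $Z_1$ bound $9/10$, the $Z_2$ bound $10^4$, and the $Y$ bound $10^{-8}$, which are precisely tuned so that $\kappa r + \|AF(\bar{x})\|_{\cX_\nu} \le r$ with a comfortable margin, and so that $\kappa = 9/10 + 10^{-2} < 1$.
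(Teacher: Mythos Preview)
Your proof is correct and follows essentially the same radii-polynomial argument as the paper: bound $\sup_{y}\|DT(y)\|$ by $\|I-A\,DF(\bar x)\|+r\sup\|A\,D^2F\|$ via the mean value inequality, then combine with the residual bound $\|AF(\bar x)\|$ to get the self-map. The only cosmetic difference is that the paper establishes the self-map directly from a second-order Taylor expansion (picking up the sharper factor $\tfrac12$ in front of the $r^2$ term), whereas you first prove the Lipschitz estimate and then reuse $\kappa$ in the triangle inequality; both routes close comfortably with the given constants.
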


\begin{proof}
Let $x, x' \in \textnormal{cl}(B_r (\bar{x}))$.
Taylor's theorem yields
\begin{align*}
\|T (x) - \bar{x}\|_{\cX_\nu}
&= \| T (\bar{x}) - \bar{x} + [DT(\bar{x})](x - \bar{x}) + \int_0^1 [DT(\bar{x} + t(x - \bar{x})) - DT(\bar{x})] (x - \bar{x}) \, dt \|_{\cX_\nu}
\\
&\le \|A F(\bar{x})\|_{\cX_\nu} + \|I - A DF(\bar{x})\|_{\mathscr{B}(\cX_\nu, \cX_\nu)} r + \frac{1}{2} \sup_{x'' \in \textnormal{cl} (B_r (\bar{x}))} \|A D^2 F(x'')\|_{\mathscr{B}(\cX_\nu, \mathscr{B}(\cX_\nu, \cX_\nu))} r^2
\\
&\le 10^{-8} + \frac{9}{10} r + \frac{10^4}{2} r^2
\\
&\le r,
\end{align*}
and, by the mean value theorem, we have
\begin{align*}
\|T (x) - T (x')\|_{\cX_\nu}
&\le \sup_{x'' \in \textnormal{cl} (B_r (\bar{x}))} \| D T (\bar{x}) - D T (\bar{x}) + D T (x'') \|_{\mathscr{B}(\cX_\nu, \cX_\nu)} \|x - x'\|_{\cX_\nu} \\
&\le \left( \|I - A DF(\bar{x})\|_{\mathscr{B}(\cX_\nu, \cX_\nu)} + \sup_{x'' \in \textnormal{cl} (B_r (\bar{x}))} \|A D^2 F(x'')\|_{\mathscr{B}(\cX_\nu, \mathscr{B}(\cX_\nu, \cX_\nu))} r \right) \| x - x' \|_{\cX_\nu} \\
&\le \left(\frac{9}{10} + 10^4 \times 10^{-6}\right) \|x - x'\|_{\cX_\nu} \\
& = \kappa \|x - x'\|_{\cX_\nu}.
\end{align*}
Therefore, $T$ satisfies the Banach Fixed-Point Theorem in $\textnormal{cl} (B_r (\bar{x}))$.
\end{proof}

At last, we have the existence of a family, parameterized by $\Omega \in [0, 1]$, of relative choreographies.

\begin{corollary}
There exists a unique zero of $F$ in $\textnormal{cl}(B_r (\bar{x}))$.
\end{corollary}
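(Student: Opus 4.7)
The plan is to derive this corollary as a short consequence of Theorem \ref{thm:contraction}. First, since Theorem \ref{thm:contraction} shows that $T$ is a $\kappa$-contraction of $\textnormal{cl}(B_r(\bar{x}))$ into itself with $\kappa = 91/100 < 1$, the Banach Fixed-Point Theorem yields a unique fixed point $x^\star \in \textnormal{cl}(B_r(\bar{x}))$ of $T$. By the definition $T(x) = x - A F(x)$ from \eqref{eq:fixed-point-operator}, this fixed point satisfies $A F(x^\star) = 0$.

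The remaining step is to promote $A F(x^\star) = 0$ to $F(x^\star) = 0$, i.e., to verify that $A$ is injective on $\cX_\nu$. This is precisely the ``by-product of the contraction'' alluded to in the discussion preceding \eqref{eq:fixed-point-operator}. The input is the bound $\|I - A DF(\bar{x})\|_{\mathscr{B}(\cX_\nu, \cX_\nu)} \le 9/10 < 1$ from Lemma \ref{lem:Z1}, which by a Neumann series argument ensures that $A DF(\bar{x})$ is a bounded invertible operator on $\cX_\nu$. Combining this with the explicit block decomposition $A = A_\textnormal{finite} \Pi_K + A_\textnormal{tail} \Pi_{\infty(K)}$ suffices: $A_\textnormal{tail}$ is diagonal with nonvanishing entries $(ik)^{-1}$ on modes $|k|>K$ and is therefore manifestly injective on the tail, while the invertibility of $A DF(\bar{x})$ forces invertibility of the finite matrix $A_\textnormal{finite}$ on $\Pi_K \cX_\nu$. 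Hence $A y = 0$ implies $y = 0$, and in particular $F(x^\star) = 0$.

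Uniqueness of the zero inside $\textnormal{cl}(B_r(\bar{x}))$ is then automatic, since any zero of $F$ is tautologically a fixed point of $T$, and the fixed point of the contraction is unique. The main conceptual obstacle in a proof of this sort is the passage from ``$T$ has a unique fixed point'' to ``$F$ has a zero'', and it reduces entirely to proving that $A$ is injective; here this is essentially free once Lemma \ref{lem:Z1} is in hand, so the corollary is a one-line consequence of Theorem \ref{thm:contraction}.
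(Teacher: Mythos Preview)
Your proposal is correct and follows essentially the same route as the paper's own proof: both use Lemma~\ref{lem:Z1} and a Neumann series to conclude that $A\,DF(\bar{x})$ is invertible (hence $A$ is surjective), and then exploit the block decomposition $A = A_\textnormal{finite}\Pi_K + A_\textnormal{tail}\Pi_{\infty(K)}$, with $A_\textnormal{tail}$ manifestly injective and $A_\textnormal{finite}$ a finite square matrix (so that surjectivity forces injectivity), to conclude that $A$ is injective and therefore the unique fixed point of $T$ is the unique zero of $F$ in $\textnormal{cl}(B_r(\bar{x}))$. Your write-up is in fact slightly more explicit than the paper's in spelling out the uniqueness step (any zero of $F$ is a fixed point of $T$), but the argument is the same.
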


\begin{proof}
By Theorem \ref{thm:contraction}, $\|DT(\bar{x})\|_{\mathscr{B}(\cX_\nu, \cX_\nu)} = \| I - A D F(\bar{x}) \|_{\mathscr{B}(\cX_\nu, \cX_\nu)} < 1$ which implies that $A D F(\bar{x})$ is invertible.
Hence, $A$ is surjective.
To show injectivity, note that $A_\textnormal{tail} : \Pi_{\infty(K)} \cX_\nu \to \Pi_{\infty(K)} \cX_\nu$ is injective.
Moreover, $A_\textnormal{finite} : \Pi_K \cX_\nu \to \Pi_K \cX_\nu$ is by construction equivalent to a finite dimensional square matrix which implies that surjectivity is equivalent to injectivity.
Thus, by injectivity of $A$, we have that the fixed-point of $T$ is a zero of $F$.
\end{proof}


\section{Proof of Marchal's Conjecture}
\label{sec:aposteriori}

In this section, we prove that the branch of solutions of the functional equation \eqref{eq:zero-finding-problem} proven to exist in Section \ref{sec:proof} corresponds to the $P_{12}$ Marchal's family of relative periodic choreographies and that this family ends in the figure eight of Chenciner and Montgomery.
This concludes the proof of Marchal's conjecture.
Before doing so, let us consider the following auxiliary result giving sufficient conditions for an isolated zero of a map to satisfy an invariance property.

\begin{lemma}\label{lem:aposteriori}
Let $\mathfrak{F} : \mathcal{D} (\mathfrak{F}) \subset \mathfrak{X} \to \mathfrak{X}$ and $\mathfrak{S} \in \mathscr{B}(\mathfrak{X}, \mathfrak{X})$ such that $\mathfrak{F} \circ \mathfrak{S} = \mathfrak{S} \circ \mathfrak{F}$ and $\| \mathfrak{S} \|_{\mathscr{B}(\mathfrak{X}, \mathfrak{X})} \le 1$.
Consider $\bar{x}, \tilde{x} \in \mathcal{D} (\mathfrak{F})$ and $r > 0$ such that $\tilde{x}$ is the unique element in $\textnormal{cl}(B_r (\bar{x})) \cap \mathcal{D} (\mathfrak{F})$ satisfying $\mathfrak{F} (\tilde{x}) = 0$.
If $\mathfrak{S} \bar{x} = \bar{x}$, then $\mathfrak{S} \tilde{x} = \tilde{x}$.
\end{lemma}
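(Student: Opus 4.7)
The plan is to show that $\mathfrak{S}\tilde{x}$ is itself a zero of $\mathfrak{F}$ lying in $\textnormal{cl}(B_r(\bar{x}))\cap\mathcal{D}(\mathfrak{F})$, and then invoke the uniqueness hypothesis to conclude $\mathfrak{S}\tilde{x}=\tilde{x}$. The two required ingredients correspond exactly to the two hypotheses on $\mathfrak{S}$: the commutation identity gives zero-preservation, while the operator-norm bound together with the invariance of $\bar{x}$ gives localization in the ball.

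First, I would use the equivariance $\mathfrak{F}\circ\mathfrak{S}=\mathfrak{S}\circ\mathfrak{F}$. Applying it at $\tilde{x}$ yields
\[
\mathfrak{F}(\mathfrak{S}\tilde{x}) = \mathfrak{S}\mathfrak{F}(\tilde{x}) = \mathfrak{S}(0) = 0,
\]
so $\mathfrak{S}\tilde{x}$ is a zero of $\mathfrak{F}$. Implicitly, the commutation identity presupposes that $\mathfrak{S}\tilde{x}\in\mathcal{D}(\mathfrak{F})$ (otherwise the left-hand side above is not defined); this is the one subtle point, but it is built into the hypothesis that $\mathfrak{F}\circ\mathfrak{S}=\mathfrak{S}\circ\mathfrak{F}$ as operators.

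Second, I would combine $\mathfrak{S}\bar{x}=\bar{x}$ with $\|\mathfrak{S}\|_{\mathscr{B}(\mathfrak{X},\mathfrak{X})}\le 1$ to localize $\mathfrak{S}\tilde{x}$ in the ball:
\[
\|\mathfrak{S}\tilde{x}-\bar{x}\|_{\mathfrak{X}} = \|\mathfrak{S}(\tilde{x}-\bar{x})\|_{\mathfrak{X}} \le \|\mathfrak{S}\|_{\mathscr{B}(\mathfrak{X},\mathfrak{X})}\,\|\tilde{x}-\bar{x}\|_{\mathfrak{X}} \le \|\tilde{x}-\bar{x}\|_{\mathfrak{X}} \le r,
\]
so $\mathfrak{S}\tilde{x}\in\textnormal{cl}(B_r(\bar{x}))\cap\mathcal{D}(\mathfrak{F})$.

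Finally, by uniqueness of the zero of $\mathfrak{F}$ in $\textnormal{cl}(B_r(\bar{x}))\cap\mathcal{D}(\mathfrak{F})$, the two zeros $\mathfrak{S}\tilde{x}$ and $\tilde{x}$ must coincide, which is the desired conclusion. The statement is in essence a clean packaging of the standard uniqueness-plus-equivariance argument, so there is no real obstacle beyond being explicit about the domain issue mentioned above.
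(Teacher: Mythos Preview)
Your proof is correct and follows exactly the same three-step argument as the paper: use the commutation $\mathfrak{F}\circ\mathfrak{S}=\mathfrak{S}\circ\mathfrak{F}$ to see that $\mathfrak{S}\tilde{x}$ is a zero, use $\mathfrak{S}\bar{x}=\bar{x}$ together with $\|\mathfrak{S}\|\le 1$ to place it in $\textnormal{cl}(B_r(\bar{x}))$, and invoke uniqueness. Your version is in fact slightly more explicit than the paper's, including the remark about the domain issue being absorbed into the equivariance hypothesis.
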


\begin{proof}
The property $\mathfrak{F} \circ \mathfrak{S} = \mathfrak{S} \circ \mathfrak{F}$ implies that $\mathfrak{S} \tilde{x}$ is a zero of $\mathfrak{F}$.
From $\| \mathfrak{S} \|_{\mathscr{B}(\mathfrak{X}, \mathfrak{X})} \le 1$, it follows that $\mathfrak{S} \tilde{x} \in \textnormal{cl}(B_r (\bar{x}))$.
The fact that $\tilde{x}$ is the unique zero of $\mathfrak{F}$ in $\textnormal{cl}(B_r (\bar{x}))$ concludes the proof.
\end{proof}

We now finish the demonstration of our main result.
For the sake of clarity, we give a precise statement of Theorem \ref{thm:MarchalWasRight} as follows.

\begin{theorem}\label{thm:main}
The $3$-body problem with the equation \eqref{eq:N_body} has the family of relative periodic solutions $q = (q_1, q_2, q_3)$ of the form
\[
q_j (t) = e^{-\Omega \bar{J} t} U(t + 4\pi j/3, \Omega), \qquad j = 1, 2, 3,
\]
where $U(t, \Omega) : \mathbb{R} / 2\pi \mathbb{Z} \times [0, 1] \to \mathbb{R}^3$ is an analytic function obtained as the unique fixed-point of $T$ in Section \ref{sec:proof}.
At $\Omega = 1$, the solution is the Lagrange triangle.
At $\Omega = 0$, the solution is of the form $q = (0, q_y, q_z)$, and its orbit is an eight-shaped figure with exactly the same properties as the main theorem of \cite{ProofEight}, and reported in Lemma \ref{lem:eight_shape}.
\end{theorem}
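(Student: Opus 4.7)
The plan is to unpack the unique zero $\tilde{x} = (\tilde{a}, \tilde{\beta}, \tilde{\alpha}, \tilde{u}, \tilde{v}, \tilde{w})$ of $F$ guaranteed by the corollary of Section~\ref{sec:proof}, and to check that reversing the three reductions of Section~\ref{sec:setUp} produces the claimed family. First I would establish that $\tilde{x}$ is real-valued: since $F$ has real coefficients it commutes with complex conjugation $\mathfrak{S}$, which has operator norm one on $\cX_\nu$, and by construction $\bar{x}$ has real Fourier-Chebyshev coefficients, so Lemma~\ref{lem:aposteriori} gives $\mathfrak{S}\tilde{x} = \tilde{x}$. This places $\tilde{x}$ in $\R^3 \times \mathcal{U} \times \mathcal{V} \times \mathcal{W}$, so Lemmas~\ref{lem:beta} and \ref{lem:alpha} apply pointwise in $\Omega$ and yield $\tilde{\beta} \equiv 0$ and $\tilde{\alpha} \equiv 0$. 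Setting $\tilde{U} = L_{\sqrt{\tilde{a}}} \tilde{u}$ and $q_j(t, \Omega) = e^{-\Omega \bar{J} t}\tilde{U}(t + 4\pi j/3, \Omega)$, the choreography symmetry \eqref{eq:layout_symmetry} together with the fixed-point subspace $\mathcal{U}$ guarantees that this branch sits inside the $P_{12}$ class of Marchal.

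To recover the Lagrange triangle at $\Omega = 1$, I would use the isolation statement of Lemma~\ref{lem:triangle}. The numerical candidate $\bar{x}$ is constructed so that $\bar{x}(1)$ coincides with the Lagrange triangle data, in particular $\bar{a}(1) = 0$, and by the contraction estimate $\tilde{x}(1)$ lies within $r = 10^{-6}$ of that data. To promote this closeness to equality I would invoke Lemma~\ref{lem:aposteriori} a second time with the involution $\mathfrak{S}$ that sends $(a, u_3, v_3) \mapsto (-a, -u_3, -v_3)$, whose validity follows from the fact that $L_{\sqrt{a}}$ enters the system only through the pair $(a, u_3)$ in the combination $\sqrt{a}\,u_3$; the numerical candidate is built to be $\mathfrak{S}$-symmetric, so the lemma pins down $\tilde{a}(1) = 0$ exactly, after which Lemma~\ref{lem:triangle} forces $\tilde{u}(\cdot, 1)$ to be the Lagrange configuration.

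For the figure-eight endpoint at $\Omega = 0$, I would apply Lemma~\ref{lem:eight_shape}, whose only nontrivial hypotheses are $\tilde{U}_x(\cdot, 0) \equiv 0$ and the nonvanishing of $\tilde{U}(\cdot,0) \times \dot{\tilde{U}}(\cdot,0)$ on $(0, \pi/2)$. The first is again a symmetry statement: at $\Omega = 0$ the system \eqref{eq:motion_blowup_DDE} is invariant under $(u_1, v_1, \beta) \mapsto (-u_1, -v_1, -\beta)$, and I would lift this to an involution $\mathfrak{S}$ on $\cX_\nu$ by flipping the sign of the Chebyshev modes of parity making the evaluation at $\Omega = 0$ change sign while preserving the $\Omega$-polynomial structure of $F$. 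Since $\bar{u}_1(\cdot, 0) = 0$ by construction, $\mathfrak{S}\bar{x} = \bar{x}$, and Lemma~\ref{lem:aposteriori} gives $\tilde{u}_1(\cdot, 0) = 0$. Conclusions $(i)$--$(iii)$ of Lemma~\ref{lem:eight_shape} are then immediate.

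The hardest step is the verification of item $(iv)$ of Lemma~\ref{lem:eight_shape}, namely the pointwise nonvanishing of $\tilde{U}(\cdot, 0) \times \dot{\tilde{U}}(\cdot, 0)$ on the open interval $(0, \pi/2)$. This is an open condition well-suited to interval arithmetic but requires a \emph{uniform} lower bound along a curve. I would subdivide $(0, \pi/2)$ into sufficiently many subintervals, evaluate the cross product of the truncated numerical $\bar{U}(\cdot,0)$ and $\dot{\bar{U}}(\cdot,0)$ with interval arithmetic on each patch, and absorb the perturbation from $\tilde{U} - \bar{U}$ via the embedding $\mathcal{U}_\nu \hookrightarrow C^0$ together with the posteriori bound $\|\tilde{x} - \bar{x}\|_{\cX_\nu} \le r$. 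A secondary difficulty, anticipated in the two previous paragraphs, is the careful bookkeeping of $\Omega$-mode parities needed so that the two symmetry operators $\mathfrak{S}$ genuinely satisfy the hypotheses of Lemma~\ref{lem:aposteriori}.
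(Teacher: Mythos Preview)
Your overall architecture matches the paper's: real-valuedness via Lemma~\ref{lem:aposteriori} with complex conjugation, then $\tilde\alpha=\tilde\beta=0$ via Lemmas~\ref{lem:beta} and~\ref{lem:alpha}, then endpoint identification. Two steps, however, have genuine gaps.

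\textbf{Lagrange endpoint.} Your involution $(a,u_3,v_3)\mapsto(-a,-u_3,-v_3)$ neither commutes with $F$ nor fixes $\bar x$. The constraint $\eta_1(u)=u_3(0,\Omega)-1$ is sent to $-u_3(0,\Omega)-1$, which is not $\pm\eta_1$; and $\bar u_3(t,1)=\cos t$ is not odd under your map, so $\mathfrak S\bar x\neq\bar x$. The paper avoids any symmetry argument here: one simply builds $\bar x$ so that $\bar x|_{\Omega=1}$ \emph{equals} the exact Lagrange data of Lemma~\ref{lem:triangle}. That data is an exact zero of $F|_{\Omega=1}$, hence the unique zero in $\textnormal{cl}(B_r(\bar x|_{\Omega=1}))$ must be $\bar x|_{\Omega=1}$ itself. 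No $\mathfrak S$ is needed. Similarly, for $U_x(\cdot,0)=0$ the paper does not try to lift an involution to all of $\cX_\nu$ via Chebyshev parities; it applies Lemma~\ref{lem:aposteriori} directly to $\mathfrak F=F|_{\Omega=0}$ with the pointwise sign flip $\Sigma_0:(u_1,v_1)\mapsto(-u_1,-v_1)$, which does commute with $F|_{\Omega=0}$.

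\textbf{Item (iv) of Lemma~\ref{lem:eight_shape}.} Your plan of subdividing $(0,\pi/2)$ and enclosing $\mu(t)=U\times\dot U$ with interval arithmetic fails near $t=\pi/2$. From the symmetries \eqref{eq:U_sym} one checks $\mu(\pi/2)=\dot\mu(\pi/2)=\ddot\mu(\pi/2)=0$, so $\mu(t)=O((t-\pi/2)^3)$; any enclosure of $\mu$ carrying the a~posteriori error $r=10^{-6}$ will straddle zero on a neighbourhood of $\pi/2$ that no refinement can remove. The paper resolves this by a derivative cascade: verify $\mu<0$ on $[0,3/2]$ directly, then verify $\dddot\mu>0$ on $[3/2,\pi/2]$ (the exponential Fourier decay from $\nu>1$ controls derivatives). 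Strict monotonicity of $\ddot\mu$ together with $\ddot\mu(\pi/2)=0$ forces $\ddot\mu$ to have one sign on $[3/2,\pi/2)$, and iterating gives $\mu<0$ there. This third-derivative step is the missing idea.
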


\begin{proof}
From Theorem \ref{thm:contraction}, we have proven existence (and local uniqueness) of a zero $\tilde{x} = (\tilde{a}, \tilde{\beta}, \tilde{\alpha}, \tilde{u}, \tilde{v}, \tilde{w}) \in \mathcal{X}_\nu$ of $F$, defined in \eqref{eq:zero-finding-problem}.
The functions $\tilde{u}, \tilde{v}, \tilde{w}$ are analytic with respect to $t$ due to our choice of norm \eqref{eq:nu_mu_norm}.
As a matter of fact, since the fixed-point operator $T$, defined in \eqref{eq:fixed-point-operator}, is analytic with respect to the parameter $\Omega$, it follows from the uniform contraction that all the functions $\tilde{a}, \tilde{\beta}, \tilde{\alpha}, \tilde{u}, \tilde{v}, \tilde{w}$ are analytic.

Furthermore, according to Lemma \ref{lem:beta} and \ref{lem:alpha}, it follows that $\tilde{\alpha}(\Omega) = \tilde{\beta}(\Omega) = 0$ for all $\Omega \in [0, 1]$.
Therefore,
\[
U = L_{\sqrt{\tilde{a}}} \tilde{u}
\]
is an analytic periodic solution of \eqref{eq:N_body_delay}.

To show that $U(t, 1)$ is the Lagrange triangle, we note that $\tilde{x}|_{\Omega=1}$ is the unique zero of $F|_{\Omega=1}$ in $\textnormal{cl}(B_r (\bar{x}|_{\Omega=1}))$.
By imposing that $\bar{x}|_{\Omega=1}$ is the Lagrange triangle, it follows that $\tilde{x}|_{\Omega=1} = \bar{x}|_{\Omega=1}$.
Specifically, recalling Lemma \ref{lem:triangle}, we set
\[
\bar{x}|_{\Omega=1} = (0, 0, 0, \bar{u}(t, 1), \partial_t \bar{u}(t, 1), 3^{-1/3}),
\]
where
\[
\bar{u}(t, 1) =  \begin{pmatrix} 3^{-1/6} \cos 2 t \\ -3^{-1/6} \sin 2 t \\ \cos t \end{pmatrix}.
\]

Now, the Banach space $\cX_\nu$ contains complex-valued functions, hence we cannot a priori conclude that $U$ is the $P_{12}$ family.
To solve this, we define $\Sigma : \cX_\nu \to \cX_\nu$ by
\[
\Sigma x \bydef (a^*, \beta^*, \alpha^*, u^*, v^*, w^*), \qquad \text{for all } x = (a, \beta, \alpha, u, v, w) \in \cX_\nu,
\]
where, given a function $h : \R \to \C$ (resp. $h : \R^2 \to \C$), we use the notation $[h^*](\Omega) \bydef h(\Omega)^*$ (resp. $[h^*](t, \Omega) \bydef h(t, \Omega)^*$), with the superscript $^*$ representing the complex conjugate.
The functions $a, \beta, \alpha, u, v, w$ are real-valued if and only if $\Sigma x = x$.
Again, we constraint the Fourier-Chebyshev coefficients (which are finite in number) of the approximation of the $P_{12}$ family such that $\Sigma \bar{x} = \bar{x}$.
It is straightforward to verify that Lemma \ref{lem:aposteriori} is satisfied with $\mathfrak{F} = F$, $\mathfrak{S} = \Sigma$.
Therefore,
\[
\Sigma \tilde{x} = \tilde{x},
\]
which completes the proof that $U$ is real-valued and represents the $P_{12}$ family.

Lastly, we show that $U(t, 0)$ is the figure eight described in Lemma \ref{lem:eight_shape}.
First, let us prove that $U(t, 0)$ lies in the $yz$-plane for all $t$.
Let $\Sigma_0 : \cX_\nu \to \cX_\nu$ be given by
\[
\Sigma_0 x \bydef (a, \beta, \alpha, -u_1, u_2, u_3, -v_1, v_2, v_3, w),
\]
for all $x = (a, \beta, \alpha, u, v, w) \in \cX_\nu$, with $u = (u_1, u_2, u_3)$ and $v = (v_1, v_2, v_3)$.
The functions $u_1(\,\cdot\,, 0), v_1(\,\cdot\,, 0)$ are zero if and only if $\Sigma_0 x|_{\Omega=0} = x|_{\Omega=0}$.
As done for the Lagrange triangle, we constraint the finite number of Fourier-Chebyshev coefficients defining the approximation of the $P_{12}$ family such that $\Sigma_0 \bar{x} |_{\Omega=0} = \bar{x} |_{\Omega=0}$.
Note that $\tilde{x} |_{\Omega=0}$ is the unique zero of $F|_{\Omega = 0}$ in $\textnormal{cl}(B_r (\bar{x} |_{\Omega=0}))$.
Then, Lemma \ref{lem:aposteriori} holds with $\mathfrak{F} = F|_{\Omega = 0}$, $\mathfrak{S} = \Sigma_0$.
Whence,
\[
\Sigma_0 \tilde{x} |_{\Omega=0} = \tilde{x} |_{\Omega=0},
\]
meaning that $U(t, 0)$ lies in the $yz$-plane for all $t$ as desired.

To establish that the orbit of $U(t, 0)$ is the same as the one proven by Chenciner and Montgomery \cite{ProofEight}, we satisfy Lemma \ref{lem:eight_shape}.
Points $(i)$-$(ii)$-$(iii)$ follow immediately from the symmetries considered in $\cX_\nu$.
It remains to verify $(iv)$.
To do so, first observe that
\[
\mu(t) \bydef U(t, 0) \times \dot{U}(t, 0) =
\begin{pmatrix}
U_y (t, 0) \dot{U}_z (t, 0) - U_z (t, 0) \dot{U}_y (t, 0) \\ 0 \\ 0
\end{pmatrix}.
\]
From our symmetries, detailed in \eqref{eq:U_sym}, it is straightforward to prove that $\mu(\pi/2) = 0$.
Since the function $\mu$ is known with a margin of error related to $r =10^{-6}$ (see Theorem \ref{thm:contraction}), we cannot directly determine its sign close to $\pi/2$.
In fact, it holds that $\dot{\mu}(\pi/2) = \ddot{\mu}(\pi/2) = 0$.
Thus, using the computer and interval arithmetic, we perform the following steps:
\begin{enumerate}
\item we show that $\mu(t) < 0$ on $[0, 3/2]$. It remains to study the sign of $\mu$ on $[3/2, \pi/2]$.
\item we show that $\dddot{\mu}(t) > 0$ on $[3/2, \pi/2]$.
Note that we are able to control derivatives since $U \in \mathcal{U}_\nu$ with $\nu > 1$, indeed the Fourier coefficients $a_k$ of a function in this space enjoy an exponential decay rate $|a_k| \le (\sum_{k \in \Z} |a_k|) \nu^{-|k|}$.
Hence, $\ddot{\mu}$ is strictly monotone on $[3/2, \pi/2]$, therefore $\ddot{\mu}(t) = 0$ if and only if $t = \pi/2$ which means that $\ddot{\mu}$ has a sign on $[3/2, \pi/2)$.
Repeating this argument, it follows that $\dot{\mu}$ has a sign on $[3/2, \pi/2)$, and in turns $\mu(t) < 0$ for all $t \in [3/2, \pi/2)$ as desired.
\end{enumerate}
This concludes the proof that $U$ is the $P_{12}$ family, starting at the Lagrange triangle and ending at the figure eight.
\end{proof}

\section{Future work} \label{sec:future}

By way of conclusions, we consider several additional directions which we view as natural continuations of the present work.

First, the figure eight choreography is a planar curve having three obvious symmetry axes: two of them in the plane of the eight, and a third one in the direction orthogonal to the plane.
In our setup, $q_z$ corresponds to the direction along which the eight is the longest, $q_y$ the orthogonal direction to $q_z$ in the plane that contains the eight, and $q_x$ orthogonal to this plane.
In \cite{MR2134901}, Chenciner, Féjoz and Montgomery find three families of relative choreographies that bifurcate out of the eight corresponding to each of the symmetry axes, named $\Gamma_1$, $\Gamma_2$ and $\Gamma_3$ respectively.

As stated in \cite{MR2134901}, $\Gamma_1$ corresponds to the $P_{12}$ family studied here.
The family $\Gamma_2$, on the other hand, is related to rotations about the $q_y$ direction, forming an out-of-plane choreographic family bifurcating from the eight and having fewer symmetries than $P_{12}$.
Numerical evidence from \cite{Arioli2006,Barutello_2004,MarchalI} indicates that the family of choreographic orbits -- which branches off $P_{12}$ and has axial symmetry in the rotating frame -- relates to $\Gamma_2$.
Numerical evidence also suggests that this axial family at $\Omega = 3/2$ draws the figure eight again in the inertial frame (now located in the plane $xOz$, where $q_x$ is the direction along which the eight is the longest).
It would be interested to extend the proof done here, to have a different mechanism to get from the Lagrange triangle to the figure eight using the connection of $P_{12}$ with the axial family.

The family $\Gamma_3$ was already known numerically to Michel Hénon using computations similar to \cite{Henon1974} and has been considered in other works, see for example \cite{Calleja2018}.
Numerical evidence suggests that there is no off-plane bifurcation from $\Gamma_3$ and making this statement rigorous is also an interesting future direction.

A numerical study of linear stability is reported in \cite{MR2134901}.
Chenciner, Féjoz and Montgomery mention that the $P_{12}$ family becomes unstable as soon as the family leaves the eight: so, we do not expect to have an interval of stability. However \cite{MR2134901} and  \cite{MarchalI} suggest an interval of stability in $\Gamma_2$ and $\Gamma_3$ from the eight.

Finally, we remark that in the paper \cite{MarchalI}, four of the authors of the present work stated the following generalization of Marchal's conjecture:

\begin{conjecture}\label{conj:first}
For any odd number of bodies, the $n$-gon choreography and the $n$-body figure eight are in the same continuation class.
\end{conjecture}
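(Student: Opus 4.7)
The plan is to mirror the four-step framework developed in this paper for the $n=3$ case, adapting each ingredient to odd $n \ge 5$. First, I would set up the equations of motion in a rotating frame, writing the positions of all $n$ bodies as time-shifts of a single representative body, $q_j(t) = e^{-\Omega \bar{J} t} U(t + 4\pi j/n, \Omega)$. The relevant symmetry group is the subgroup of $D_n \times \mathbb{Z}_2 \times \mathbb{Z}_2$ fixed by the choreography constraint and by the discrete symmetries of the $n$-body eight described by Chenciner--F\'ejoz--Montgomery. As in Section~\ref{sec:setUp}, these symmetries are imposed directly in Fourier-Chebyshev coefficient space. The candidate continuation class is the vertical Lyapunov family emanating from the Hoppe $n$-gon at $\Omega = 1$, and the conjecture amounts to showing that this family reaches a planar choreography at some $\Omega = \Omega_\star \in [0, 1)$ with exactly the symmetries of the $n$-body eight.

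Second, I would desingularize at both endpoints. At the $n$-gon endpoint the blow-up of Section~\ref{sec:desingularization} generalizes verbatim: the decoupling of the in-plane and out-of-plane linearization at the Hoppe polygon is a standard fact, and the same scaling $U = L_{\sqrt{a}} u$ with a single scalar unfolding parameter should isolate the vertical Lyapunov branch from the homothetic family. At the $n$-body eight endpoint, the situation is more delicate because for larger odd $n$ one must first fix the existing $n$-body eights rigorously (e.g. the choreographies of Sim\'o) and then understand which translational/rotational degeneracies they carry; each continuous symmetry of the planar eight calls for one additional unfolding parameter $\beta_i$ along with one moment-type normalization, exactly as in Lemma~\ref{lem:beta}.

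Third, I would reformulate the problem as a polynomial zero-finding problem $F(x)=0$ by introducing auxiliary variables $w_j = \|L_{\sqrt{a}}u - S^j L_{\sqrt{a}}u\|^{-1}$ for $j = 1, \dots, \lfloor n/2 \rfloor$, extend the Banach algebra $\mathcal{X}_\nu$ to accommodate these extra components, and apply the Newton-Kantorovich/uniform contraction argument of Section~\ref{sec:proof}. The numerical candidate $\bar{x}$ would be produced by continuation from a discretized Hoppe $n$-gon, using a collocation at Chebyshev nodes in $\Omega$ and then an inverse discrete Fourier transform in the Chebyshev variable. The $Y$-, $Z_1$- and $Z_2$-bounds of Lemmata~\ref{lem:Y}, \ref{lem:Z1} and \ref{lem:Z2} generalize without conceptual change, although the injectivity and symmetry-preservation arguments at the endpoints (Lemma~\ref{lem:aposteriori} applied to the appropriate planar and complex-conjugation involutions) would need to be re-established for each $n$.

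The hard part will be the computational cost and the identification of a clean numerical continuation path. The number of relevant Fourier-Chebyshev modes, and hence the size of the finite matrix $A_{\text{finite}}$, grows roughly like $n^2$, while the number of pairwise distance variables $w_j$ grows linearly in $n$; the contraction radius $r$ achievable with fixed computing resources shrinks correspondingly. More seriously, for odd $n \ge 5$ there is no proof analogous to \cite{MR2134901} establishing that the vertical Lyapunov family from the $n$-gon is locally transverse to the plane containing the $n$-body eight, nor is there even a rigorous existence proof for most of the relevant $n$-body eights: both would need to be established first, presumably by variational or computer-assisted means. A reasonable first target is therefore $n = 5$, where Sim\'o's numerics in \cite{MR1884902} already suggest a candidate branch and the symmetry reduction is tractable; the general odd $n$ case would likely require a theoretical result trading off the asymptotic geometry of the branch against the finite-$n$ computer-assisted proof.
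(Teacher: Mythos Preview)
The statement you are attempting to prove is Conjecture~\ref{conj:first}, which appears in Section~\ref{sec:future} (``Future work'') of the paper. The paper does \emph{not} prove this statement; it is explicitly presented as an open problem, and the authors even revise it in light of a remark by Chenciner into a sharper ``Generalized Marchal's Conjecture -- Updated.'' There is therefore no ``paper's own proof'' against which to compare your proposal.

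What you have written is not a proof but a research outline, and as such it is broadly in line with what the paper itself suggests: the authors state that the generalized conjecture ``should be amenable to the methods developed in the present paper,'' and your four-step plan (symmetry reduction, desingularization at both endpoints, polynomial embedding with auxiliary variables $w_j$, and a Newton--Kantorovich contraction in a Fourier--Chebyshev space) is precisely that adaptation. You have also correctly identified the principal obstructions the paper flags: for odd $n \ge 5$ there is no rigorous existence proof for the $n$-body figure eight, no uniqueness result (open even for $n=3$), and no analogue of the local transversality analysis of \cite{MR2134901} at the eight endpoint. Until those ingredients are in place, the contraction argument cannot even be set up, since the desingularization at the eight requires knowing which continuous symmetries the target planar choreography carries. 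Your proposal is thus a sensible programme, but it remains a programme; no part of it constitutes a proof of the conjecture.
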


After \cite{MarchalI} was published, Chenciner stressed, in private communication, that the conjecture of Marchal was not just about having a connection between the triangle and the eight.
It was also about showing that both choreographies actually belong to the $P_{12}$ family.
In this manner, Conjecture \ref{conj:first} should not be called a generalization, but a conjecture in its own right, supported by the numerical evidence in \cite{MarchalI}.

Specifically, Theorem 38 in \cite{Gar-Ize-13} states that for any $n \ge 3$ and for each integer $k$ such that $1 \le k \le n/2$, the $n$-polygonal relative equilibrium has a least one family of spatial periodic solutions with prescribed symmetries in terms of $k$.
In \cite{MarchalI} the numerical computations support the conjecture that the $n$-gon choreography and the figure eight choreography are in the same continuation class through the family with symmetries $k=2$.
On the other hand, numerical computations in \cite{MR2480953} support the conjecture that the $n$-gon choreography and the figure eight are in the same continuation class through the generalized $P_{12}$ family, which corresponds in \cite{Gar-Ize-13} to the family with symmetries $k=(n-1)/2$.

Therefore, the generalized Marchal's conjecture for odd number of bodies should read as follows.

\begin{conjecture}
\textit{(Generalized Marchal's Conjecture - Updated)}
For any odd number of bodies, the $n$-gon choreography and the figure eight are in the same continuation class through two different families of relative choreographies.
\end{conjecture}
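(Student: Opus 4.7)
\medskip

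\noindent\textbf{Proposal for the Generalized Marchal's Conjecture (Updated).}
The plan is to extend the entire machinery of Sections~\ref{sec:setUp}--\ref{sec:aposteriori} to the $n$-body problem with odd $n$, carried out \emph{twice in parallel}, once for the continuation class with symmetry parameter $k=2$ and once for the class with $k=(n-1)/2$, and then establish that the two branches thus produced are distinct. First, one rewrites \eqref{eq:N_body} for $n$ equal masses in a rotating frame following \eqref{eq:rotating_coordinates}, imposes the choreography identification $U_j(t,\Omega)=U_n(t+2\pi j/n,\Omega)$, and reduces to a single delay equation analogous to \eqref{eq:N_body_delay} but with $n-1$ shifted copies of $U$. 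For each admissible $k\in\{2,(n-1)/2\}$, one then imposes the discrete symmetries identified by Theorem~38 of \cite{Gar-Ize-13} for the corresponding vertical Lyapunov family of the $n$-gon relative equilibrium; concretely, this means fixing the parity and sparsity pattern of the Fourier coefficients of $U$ (as in \eqref{eq:U_sym}) in a $k$-dependent way so that only the desired continuation class survives.

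Next, both endpoints must be desingularized, exactly as in Section~\ref{sec:desingularization}. At $\Omega=1$ the $n$-gon is a planar relative equilibrium meeting the out-of-plane family, so the blow-up $U=L_{\sqrt{a}}u$ with the normalization $u_3(0,\Omega)=1$ and the unfolding parameter $\alpha(\Omega)$ extends verbatim, with the linear analysis of Lemma~\ref{lem:triangle} replaced by the corresponding vertical Lyapunov analysis about the $n$-gon. At $\Omega=0$ one expects (on the basis of the numerics of \cite{MarchalI,MR2480953}) a planar figure-eight-type choreography, so the unfolding parameter $\beta(\Omega)$ killing the $x$-translation family is still needed. One then introduces the same auxiliary variables $v=\partial_t u$ and $w_j=\|L_{\sqrt{a}}u-S^j L_{\sqrt{a}}u\|^{-1}$ for $j=1,\dots,n-1$, exploits the $t\mapsto -t$ symmetry to halve the number of independent $w_j$'s, and arrives at a polynomial zero-finding problem $F^{(n,k)}=0$ on a Fourier--Chebyshev Banach algebra $\cX_\nu$ of the same flavor as \eqref{eq:banach_space}. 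The contraction argument of Section~\ref{sec:proof}, with the bounds of Lemmata~\ref{lem:Y}, \ref{lem:Z1}, \ref{lem:Z2}, applies essentially formally; the only new analytical input is the combinatorial bookkeeping of the $n-1$ interaction terms.

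The hard part will be, unsurprisingly, the \emph{computational} execution together with one genuinely new \emph{qualitative} step. On the computational side, the number of delay terms in the reduced equation grows linearly in $n$, while the number of Fourier modes needed to resolve the figure eight for large $n$ is expected to grow as well; the size of the interval-arithmetic computations underlying the analogues of Lemmata~\ref{lem:Y}--\ref{lem:Z2} therefore grows polynomially in $n$, which is manageable case-by-case but does not give a single uniform-in-$n$ argument. Obtaining a reasonable numerical approximation $\bar x^{(n,k)}$ of each branch, especially for the $k=2$ family which is conjectured to bifurcate from $P_{12}$ via an intermediate axial-symmetry branch (see the discussion of $\Gamma_2$ in Section~\ref{sec:future}), will require careful homotopy continuation through any intermediate symmetry-breaking points.

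On the qualitative side, one must finally prove that the two branches produced (for $k=2$ and $k=(n-1)/2$) are \emph{distinct} families of relative choreographies when $n\ge 5$. This will be done by exhibiting, at some intermediate $\Omega_\star\in(0,1)$, a continuous functional $\Phi:\mathcal{U}^{\mathbb{C}}\to\mathbb{R}$ (for instance a suitable Fourier mode or symmetry-type indicator extracted from the coefficient pattern in \eqref{eq:U_sym} specialized to each $k$) whose values on the two candidate branches are separated by more than the sum of the $\cX_\nu$-validation radii; the triangle inequality then rigorously separates the two branches. The figure-eight endpoint itself is handled as in the proof of Theorem~\ref{thm:main}: the symmetries and planarity follow from invariance of $\bar x$ under a conjugation operator $\Sigma_0$ via Lemma~\ref{lem:aposteriori}, and the eight-shape of the orbit follows from a computer-checked sign argument on $\mu(t)=U(t,0)\times\dot U(t,0)$, using the exponential Fourier decay built into $\cX_\nu$ to control derivatives. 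Writing up the proof uniformly in $n$ appears out of reach with present techniques; I would state and prove the theorem for each odd $n$ up to some explicit $n_{\max}$, with the expectation that the structure of the argument, and in particular the interval-arithmetic scripts of \cite{Henot2024} suitably templated in $n$ and $k$, generalize without conceptual change.
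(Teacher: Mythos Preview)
The statement you are attempting to prove is presented in the paper as a \emph{conjecture}, not a theorem: the paper offers no proof, and explicitly lists it in Section~\ref{sec:future} as future work whose resolution ``requires settling many open problems,'' including the existence and uniqueness of the $n$-body figure eight for odd $n\ge 5$ (uniqueness being open even for $n=3$). There is therefore no proof in the paper to compare your proposal against.

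As a research plan your outline is broadly in the spirit of what the authors suggest (``should be amenable to the methods developed in the present paper''), but you should be aware of two obstacles the paper itself flags and that your proposal does not adequately address. First, the figure-eight endpoint at $\Omega=0$ is not known to exist for general odd $n$, so your desingularization step there presupposes an object whose existence is itself part of the conjecture; you would need to \emph{produce} the $n$-body eight as the $\Omega=0$ limit of your validated branch, not assume it. Second, for $n=3$ the paper notes that the second family connecting the triangle to the eight arises as a symmetry-breaking from $P_{12}$ \emph{outside} the symmetry space $\mathcal{U}$ used here, so your plan to impose the $k$-dependent Fourier symmetries ``so that only the desired continuation class survives'' cannot work uniformly: at least one of the two families will require working in a larger function space and handling a genuine bifurcation along the branch, not merely at the endpoints.
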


This conjecture, which is supported by the available numerical evidence, requires settling many open problems.
For example the existence and the uniqueness of the figure eight for any odd number of bodies.
This uniqueness is still open for $n=3$.
Actually, only for $n=3$ do the two different families start with the same symmetries.
Furthermore, the numerical evidence in \cite{MarchalI} shows that the second family that connects the triangle with the eight arises precisely as a symmetry-breaking from the $P_{12}$ family.
This fact does not contradict the local uniqueness obtained in this paper because the second family is precisely a symmetry-breaking outside the space of discrete symmetries $\mathcal{U}$ considered in Section \ref{sec:setUp}.

The verification of the \emph{generalized Marchal's conjecture} for $n=3$, and also for other odd number of bodies, may be amenable to the methods developed in the present paper.
We also refer the reader to the works \cite{Yu2017SimpleChoreographies,Yu2021ConnectingPlanarChains} for a complementary perspective on the problem, based on action minimization under symmetry and topological constraints.

\section{Acknowledgments}

We would like to thank A. Chenciner, J. F\'{e}joz, R. Montgomery and C. Sim\'{o} for fruitful discussions.
R.~Calleja was partially supported by UNAM-PAPIIT under grants No. IN 103423 and IN 104725.
O.~H\'{e}not was supported by ANR under project No. CAPPS: ANR-23-CE40-0004-01, and by NSTC under grant No. 115-2115-M-002-001-MY2.
J.-P.~Lessard was supported by NSERC.
J.~D.~Mireles~James was partially supported by NSF under grant No. DMS 2307987.


\bibliographystyle{abbrv}
\bibliography{references}

\end{document}